\numberwithin{equation}{section} 
\theoremstyle{plain} 
\newtheorem{thm}{Theorem}
\newtheorem{prop}{Proposition}
\theoremstyle{definition}
\newtheorem{defn}{Definition}
\theoremstyle{remark} 
\newtheorem{remark}{Remark}
\newtheorem*{astep}{A-step}
\newtheorem*{pstep}{P-step}
\newtheorem*{cstep}{C-step}
\newcommand{\prob}{\mathsf{P}}
\newcommand{\RR}{\mathbb{R}}
\newcommand{\XX}{\mathbb{X}}
\newcommand{\UU}{\mathbb{U}}
\newcommand{\VV}{\mathbb{V}}
\newcommand{\Xbar}{\bar{X}}
\newcommand{\xbar}{\bar{x}}
\newcommand{\Ybar}{\bar{Y}}
\newcommand{\Ubar}{\bar{U}}
\renewcommand{\S}{\mathcal{S}}
\newcommand{\Sbar}{\bar{\mathcal{S}}}
\newcommand{\del}{\partial}
\renewcommand{\phi}{\varphi}
\newcommand{\bel}{\mathsf{bel}}
\newcommand{\pl}{\mathsf{pl}}
\newcommand{\mbel}{\mathsf{mbel}}
\newcommand{\mpl}{\mathsf{mpl}}
\newcommand{\unif}{\mathsf{Unif}}
\newcommand{\nm}{\mathsf{N}}
\newcommand{\iid}{\overset{\text{\tiny iid}}{\,\sim\,}}
\title{Marginal inferential models: prior-free probabilistic inference on interest parameters}
\author{
Ryan Martin \\
Department of Mathematics, Statistics, and Computer Science \\
University of Illinois at Chicago \\
\url{rgmartin@uic.edu} \\
\mbox{} \\
Chuanhai Liu \\
Department of Statistics \\
Purdue University \\
\url{chuanhai@purdue.edu}
}
\date{\today}
\begin{document}

\maketitle

\begin{abstract}
The inferential models (IM) framework provides prior-free, frequency-calibrated, posterior probabilistic inference.  The key is the use of random sets to predict unobservable auxiliary variables connected to the observable data and unknown parameters.  When nuisance parameters are present, a marginalization step can reduce the dimension of the auxiliary variable which, in turn, leads to more efficient inference.  For regular problems, exact marginalization can be achieved, and we give conditions for marginal IM validity.  We show that our approach provides exact and efficient marginal inference in several challenging problems, including a many-normal-means problem.  In non-regular problems, we propose a generalized marginalization technique and prove its validity.  Details are given for two benchmark examples, namely, the Behrens--Fisher and gamma mean problems.    

\smallskip

\emph{Keywords and phrases:} Belief; efficiency; nuisance parameter; plausibility; predictive random set; validity.
\end{abstract}

\section{Introduction}
\label{S:intro}

In statistical inference problems, it is often the case that only some component or, more generally, some feature of the parameter $\theta$ is of interest.  For example, in linear regression, with $\theta=(\beta,\sigma^2)$, often only the vector $\beta$ of slope coefficients is of interest, even though the error variance $\sigma^2$ is also unknown.  Here we partition $\theta$ as $\theta = (\psi,\xi)$, where $\psi$ is the parameter of interest and $\xi$ is the nuisance parameter.  The goal is to make valid and efficient inference on $\psi$ in the presence of unknown $\xi$.  

In these nuisance parameter problems, a modification of the classical likelihood framework is called for.  Frequentists often opt for profile likelihood methods \citep[e.g.,][]{cox2006}, where the unknown $\xi$ is replaced by its conditional maximum likelihood estimate $\hat\xi_\psi$.  The effect is that the likelihood function involves only $\psi$, so, under some conditions, point estimates and hypothesis tests with desirable properties can be constructed as usual.  The downside, however, is that no uncertainty in $\xi$ is accounted for when it is fixed at its maximum likelihood estimate.  A Bayesian style alternative is the marginal likelihood approach, which assumes an \emph{a priori} probability distribution for $\xi$.  The marginal likelihood for $\psi$ is obtained by integrating out $\xi$.  This marginal likelihood inference effectively accounts for uncertainty in $\xi$, but difficulties arise from the requirement of a prior distribution for $\xi$.  Indeed, suitable reference priors may not be available or there may be marginalization problems \citep[e.g.,][]{dawid.stone.zidek.1973}.     

For these difficult problems, something beyond standard frequentist and Bayesian approaches may be needed.  There has been a recent effort to develop probabilistic inference without genuine priors; see, for example, generalized fiducial inference \citep{hannig2012, hannig2009}, confidence distributions \citep{xie.singh.strawderman.2011, xie.singh.2012}, and Bayesian inference with default, reference, or data-dependent priors \citep{berger2006, bergerbernardosun2009, fraser2011, fraser.reid.marras.yi.2010}.  The key idea behind Fisher's fiducial argument is to write the sampling model in terms of an auxiliary variable $U$ with known distribution $\prob_U$, and then ``continue to regard'' \citep{dempster1964} $U$ as having distribution $\prob_U$ after data is observed.  \citet{liu.martin.wire} argue that the ``continue to regard'' assumption has an effect similar to a prior; in fact, the generalized fiducial distribution is a Bayesian posterior under a possibly data-dependent prior \citep{hannig2012}.  Therefore, fiducial inference is not prior-free, and generally is not exact for finite samples, which explains the need for asymptotic justification.  This same argument applies to the other fiducial-like methods, including structural inference \citep{fraser1968}, Dempster--Shafer theory \citep{dempster2008, shafer1976}, and those mentioned above.  

The \emph{inferential model} (IM) approach, described in \citet{imbasics, imbasics.c, imcond} is a new alternative that is both prior-free and is provably exact in finite samples; see, also, \citet{mzl2010} and \citet{zl2010}.  The fundamental idea behind the IM approach is that inference on an unknown parameter $\theta$ is equivalent to predicting an unobserved auxiliary variable that has a known distribution.  This view of inference through \emph{prediction} of auxiliary variables differs from fiducial's ``continue to regard,'' and is essential to exact probabilistic inference without priors \citep{sts.discuss.2014}.  The practical consequences of this approach are two-fold.  First, no prior is needed, and yet the inferential output is probabilistic and has a meaningful interpretation after data is observed.  Second, calibration properties of the IM output are determined by certain coverage probabilities of user-defined random sets, and \citet{imbasics} show that constructing valid random sets is quite simple, especially when the auxiliary variable is low-dimensional.  When the auxiliary variable is high-dimensional, constructing efficient random sets can be challenging.  A natural idea is to reduce the dimension of the auxiliary variable as much as possible before prediction.  \citet{imcond} notice that, in many problems, certain functions of the auxiliary variable are actually observed, so it is not necessary to predict the full auxiliary variable.  They propose a general method of dimension reduction based on conditioning.  In marginal inference problems, where only parts of the full parameter are of interest, we propose to reduce the dimension of the auxiliary variable even further.  Such considerations are particularly important for high-dimensional applications, where the quantity of interest typically resides in a lower-dimensional subspace, and an implicit marginalization is required.  Here we develop the IM framework for marginal inference problems based on a second dimension reduction technique.  If the model is regular in the sense of Definition~\ref{def:regular}, then the specific dimension on which to collapse is clear, and a marginal IM obtains.  Sections~\ref{SS:valid} and \ref{SS:optimal} discuss validity and efficiency of marginal IMs in the regular case.  Several examples of challenging marginal inference problems are given in Section~\ref{S:applications}.  %That a valid marginal IM is always available in regular cases is a desirable characteristic that may prove valuable for today's challenging high-dimensional problems.  

When the model is not regular, and the interest and nuisance parameters cannot easily be separated, a different strategy is needed.  The idea is that non-separability of the interest and nuisance parameters introduces some additional uncertainty, and we handle this by taking larger predictive random sets.  In Section~\ref{S:gmim}, we describe a marginalization strategy for non-regular problems based on uniformly valid predictive random sets.  Details are given for two benchmark examples: the Behrens--Fisher and gamma mean problems.  We conclude with a brief discussion in Section~\ref{S:discuss}.  %Proofs are collected in Appendix~\ref{S:proofs}. 

\section{Review of IMs}
\label{S:im.review}

\subsection{Three-step construction}
\label{SS:three.step}

To set notation, let $X \in \XX$ be the observable data.  The sampling model $\prob_{X|\theta}$, indexed by a parameter $\theta \in \Theta$, is a probability measure that encodes the joint distribution of $X$.  If, for example, $X=(X_1,\ldots,X_n)$ is a vector of iid observables, then $\XX$ is a product space and $\prob_{X|\theta}$ is an $n$-fold product measure.  

A critical component of the IM framework is an association between observable data $X$ and unknown parameters $\theta$ via unobservable auxiliary variables $U$, subject to the constraint that the induced distribution of $X$, given $\theta$, agrees with the posited sampling model $\prob_{X|\theta}$.  In this paper we express $\prob_{X|\theta}$ as follows: given $\theta \in \Theta$, choose $X$ to satisfy
\begin{equation}
\label{eq:amodel}
p(X,\theta) = a(U,\theta), \quad \text{where} \quad U \sim \prob_U.  
\end{equation}
The mathematical expression of this association is more general than that in \citet{imbasics}.  They consider $p(x,\theta) \equiv x$, which is most easily seen as a data-generation mechanism, or structural equation \citep{hannig2009, fraser1968}.  The case $a(u,\theta) \equiv u$ is the pivotal equation version \citep{dawidstone1982}.  Even more general expressions are possible, e.g., $a(X, \theta, U)=0$, but we stick with that in \eqref{eq:amodel} because, in some cases, it is important that $X$ and $U$ can be separated \citep[][Remark~1]{imcond}.  The point is that any sampling model $\prob_{X|\theta}$ that can be simulated with a computer has an association, and the form \eqref{eq:amodel} is general enough to cover many models \citep{barnard1995}.  However, there may be several associations for a given sampling model.  In fact, the key contribution of this paper boils down to showing how to choose an association that admits valid and efficient marginal inference on interest parameters.  

The key observation driving the IM approach is that uncertainty about $\theta$, given observed data $X=x$, is due to the fact that $U$ is unobservable.  So, the best possible inference corresponds to observing $U$ exactly.  This ``best possible inference'' is unattainable, and the next best thing is to accurately predict the unobservable $U$.  This point is what sets the IM approach apart from fiducial inference and its relatives.  Next is the three-step IM construction \citep{imbasics}, in which the practitioner specifies both the association and the predictive random set for the auxiliary variable.  

\begin{astep}
Associate the observed data $x$ and the unknown parameter $\theta$ with auxiliary variables $u$ to obtain sets of candidate parameter values given by 
\[ \Theta_x(u) = \{\theta: p(x,\theta) = a(u,\theta)\}, \quad u \in \UU. \]
\end{astep}

\begin{pstep}
Predict $U$ with a predictive random set $\S \sim \prob_{\S}$.  This predictive random set serves two purposes.  First, it encodes the additional uncertainty in predicting an unobserved value compared to sampling a new value.  Second, if $\S$ satisfies a certain coverage property (Definition~\ref{def:prs.valid}), then the resulting belief function \eqref{eq:lev} is valid (Definition~\ref{def:validity}).
\end{pstep}

\begin{cstep}
Combine $\Theta_x$ with $\S$ to obtain $\Theta_x(\S)$, an enlarged random set of candidate $\theta$ values, given by 
\[ \Theta_x(\S) = \bigcup_{u \in \S} \Theta_x(u). \]
Suppose that $\Theta_x(\S)$ is non-empty with $\prob_\S$-probability~1.  Then for any assertion $A \subseteq \Theta$, summarize the evidence in $X=x$ supporting the truthfulness of $A$ with the quantities 
\begin{align}
\bel_x(A;\S) & = \prob_{\S}\bigl\{\Theta_x(\S) \subseteq A \bigr\} \label{eq:lev} \\
\pl_x(A;\S) & = 1-\bel_x(A^c;\S), \label{eq:uev}
\end{align}
called the belief and plausibility functions, respectively.  Note that, for example, $\bel_x(\cdot;\S)$ is short-hand for $\bel_x(\cdot;\prob_\S)$; that is, both belief and plausibility are functions of data $x$, assertion $A$, and the distribution $\prob_\S$ of the predictive random set.  
\end{cstep}

If $\Theta_x(\S) = \varnothing$ with positive $\prob_\S$-probability, then some adjustment to the formula \eqref{eq:lev} is needed.  The simplest approach, which is Dempster's rule of conditioning \citep{shafer1976}, is to replace $\prob_\S$ by the conditional distribution of $\S$ given $\Theta_x(\S) \neq \varnothing$.  A more efficient alternative is to use elastic predictive random sets \citep{leafliu2012}, which amounts to stretching $\S$ just enough that $\Theta_x(\S)$ is non-empty; see Section~\ref{SS:stein2}.  

%Conditioning on the event $\Theta_x(\S) \neq \varnothing$ in the belief and plausibility function definition is a way of ruling out conflict cases.  We present this strategy here because it is conceptually the simplest, but there are more efficient strategies.  The one that we adopt in examples is that based on elastic predictive random sets \citep{leafliu2012}.  This amounts to stretching $\S$ just enough so that $\Theta_x(\S)$ is non-empty.  

%{\color{blue} An alternative representation that is occasionally more convenient is as follows.  For a given $x \in \XX$ and assertion $A \subseteq \Theta$, define the a-event for $A$: 
%\begin{equation}
%\label{eq:aevent}
%\UU_x(A) = \{u \in \UU: \Theta_x(u) \subseteq A\}.
%\end{equation}
%These a-events are used in Section~\ref{SS:basics} as a tool for marginalization.  They can also be used to evaluate the belief function on the $u$-space \citep{imbasics}.   }

To summarize, the practitioner specifies the association and a valid predictive random set, and the output is the pair of mappings $(\bel_x,\pl_x)$ which are used for inference about $\theta$.  For example, if $\pl_x(A;\S)$ is small, then there is little evidence in the data supporting the claim that $\theta \in A$; see \citet{impval}.  The plausibility function can also be used to construct plausibility regions.  That is, for $\alpha \in (0,1)$, a $100(1-\alpha)\%$ plausibility region is $\{\theta: \pl_x(\theta;\S) > \alpha\}$, with $\pl_x(\theta;\S) = \pl_x(\{\theta\}; \S)$.  If $\S$ is valid, then the plausibility region achieves the nominal frequentist coverage probability; see Theorem~\ref{thm:old.valid}.  

Concerning uniqueness of the practitioner's choices, it follows from \citet{imbasics} that an IM constructed using the ``optimal'' predictive random set (see Section~\ref{SS:optimal}) only depends on the sampling model, not on the choice of association.  Optimal predictive random sets  in general are theoretically and computationally challenging, and work in this direction is ongoing.  However, as we discuss in more detail below, if the auxiliary variable dimension can be reduced, then it is possible to avoid the challenge finding good predictive random sets for high-dimensional auxiliary variables.

\subsection{Properties}
\label{SS:im.properties}

The key to IM validity (Definition~\ref{def:validity}) is a certain calibration property of the predictive random set $\S$ in the P-step above.  

\begin{defn}
\label{def:prs.valid}
Let $\S$ be a predictive random set for $U \sim \prob_U$ in \eqref{eq:amodel}, and define $Q_\S(u) = \prob_\S\{\S \not\ni u\}$.  Then $\S$ is called \emph{valid} if $Q_\S(U)$ is stochastically no larger than $\unif(0,1)$, i.e., $\prob_U\{Q_\S(U) \geq 1-\alpha\} \leq \alpha$ for all $\alpha \in (0,1)$.
\end{defn}

It is relatively easy to construct valid predictive random sets.  Indeed, \citet{imbasics, imbasics.c} provide a general sufficient conditions for validity, one being that $\S$ have a nested support, i.e., for any two possible realizations of $\S$, one is a subset of the other.  Suppose that $U$ is one-dimensional and continuous.  In the case, without loss of generality, assume $\prob_U$ is $\unif(0,1)$.  The ``default'' predictive random set 
\begin{equation}
\label{eq:default.prs}
\S = \{u: |u-0.5| \leq |U-0.5|\}, \quad U \sim \unif(0,1), 
\end{equation}
has nested support and is also valid in the sense of Definition~\ref{def:prs.valid} \citep[][Corollary~1]{imbasics}.  We use this default random set in all of our examples.  

It turns out that validity of the predictive random set is all that is needed for validity of the corresponding IM.  Here IM validity refers to a calibration property of the corresponding belief/plausibility function.  

\begin{defn}
\label{def:validity}
Suppose $X \sim \prob_{X|\theta}$ and let $A \subseteq \Theta$.  Then the IM is \emph{valid for $A$} if the belief function satisfies
\begin{equation}
\label{eq:valid}
\sup_{\theta \not\in A} \prob_{X|\theta}\bigl\{ \bel_X(A;\S) \geq 1-\alpha \bigr\} \leq \alpha, \quad \forall \; \alpha \in (0,1). 
\end{equation}
The IM is called \emph{valid} if it is valid for all $A$, in which case, we can write, for all $A$, 
\begin{equation}
\label{eq:valid.pl}
\sup_{\theta \in A} \prob_{X|\theta}\{\pl_X(A;\S) \leq \alpha\} \leq \alpha, \quad \forall \; \alpha \in (0,1). 
\end{equation}
\end{defn}

%\citet[][Theorem~2]{imbasics} establish the following validity result. 

\begin{thm}
\label{thm:old.valid}
If $\S$ is a valid predictive random set for $U$, and $\Theta_x(\S) \neq \varnothing$ with $\prob_\S$-probability~1 for all $x$, then the IM is valid, i.e., \eqref{eq:valid} and \eqref{eq:valid.pl} hold for all assertions.  
\end{thm}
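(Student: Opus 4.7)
The plan is to bound $\bel_X(A;\S)$ pointwise by $Q_\S(\Ustar)$ for a suitably coupled ``true'' auxiliary variable $\Ustar$, and then invoke the validity of $\S$ exactly as stated in Definition~\ref{def:prs.valid}. Fix an assertion $A$ and a parameter $\theta \notin A$. Under $\prob_{X|\theta}$, couple $X$ with a random variable $\Ustar \sim \prob_U$ via the association \eqref{eq:amodel}, so that $p(X,\theta)=a(\Ustar,\theta)$; such a coupling exists because any draw from $\prob_{X|\theta}$ can, by definition, be simulated this way. By construction $\theta \in \Theta_X(\Ustar)$, hence whenever $\Ustar \in \S$ we have $\theta \in \Theta_X(\S)$, which combined with $\theta \notin A$ forces $\Theta_X(\S) \not\subseteq A$. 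Contrapositively,
\[
\{\Theta_X(\S) \subseteq A\} \;\subseteq\; \{\S \not\ni \Ustar\}.
\]
Taking $\prob_\S$-probabilities of both sides with $(X,\Ustar)$ held fixed, and using the independence of $\S$ from $(X,\Ustar)$, gives the deterministic (in $(X,\Ustar)$) inequality $\bel_X(A;\S) \leq Q_\S(\Ustar)$.

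Combining this pointwise bound with Definition~\ref{def:prs.valid},
\[
\prob_{X|\theta}\bigl\{\bel_X(A;\S) \geq 1-\alpha\bigr\} \;\leq\; \prob_U\bigl\{Q_\S(\Ustar) \geq 1-\alpha\bigr\} \;\leq\; \alpha,
\]
and taking the supremum over $\theta \notin A$ yields \eqref{eq:valid}. The plausibility bound \eqref{eq:valid.pl} then follows immediately by applying the just-established belief bound to the complementary assertion $A^c$: for $\theta \in A$ one has $\theta \notin A^c$, and $\pl_X(A;\S) \leq \alpha$ is by definition equivalent to $\bel_X(A^c;\S) \geq 1-\alpha$.

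The main subtlety is conceptual rather than computational: the quantity $\bel_X(A;\S)$ is a function of $X$ alone (with the randomness in $\S$ already integrated out), while the upper bound $Q_\S(\Ustar)$ depends only on $\Ustar$, so one must work on the joint space of $(X,\Ustar)$ and exploit the fact that $X$ is a function of $(\theta,\Ustar)$ determined by the association. The hypothesis that $\Theta_x(\S) \neq \varnothing$ with $\prob_\S$-probability~$1$ for every $x$ enters only implicitly, ensuring that \eqref{eq:lev} is a genuine probability and no Dempster-style renormalization is required; without it, the clean pointwise inequality $\bel_X(A;\S) \leq Q_\S(\Ustar)$ could be distorted by conditioning on non-emptiness.
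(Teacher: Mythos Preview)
Your argument is correct and is essentially the standard proof: the paper defers to Theorem~2 of \citet{imbasics}, whose proof proceeds exactly by bounding $\bel_X(A;\S) \leq \bel_X(\{\theta\}^c;\S) = Q_\S(\Ustar)$ via the coupling $p(X,\theta)=a(\Ustar,\theta)$ and then invoking validity of $\S$; the same reasoning appears explicitly in this paper's proof of Theorem~\ref{thm:unif.valid.im}. Your treatment of the non-emptiness hypothesis is also on point.
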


\begin{proof}
See the proof of Theorem~2 in \citet{imbasics}.  
\end{proof}

One consequence of Theorem~\ref{thm:old.valid} is that the plausibility region for $\theta$ achieves the nominal coverage probability.  More importantly, validity provides a scale on which the IM belief and plausibility function values can be interpreted.  Note that this calibration property is not asymptotic and does not depend on the particular class of models.  

Validity is desirable, but efficiency is also important.  See \citet{imbasics} and Section~\ref{SS:optimal} below for details about IM efficiency.  Unfortunately, specifying predictive random sets so that the corresponding IM is efficient is more challenging, especially if the auxiliary variable is of relatively high dimension.  It is, therefore, natural to try to reduce the dimension of the auxiliary variable.  \citet{imcond} proposed a conditioning operation that effectively reduces the dimension of the auxiliary variable, and they give examples to demonstrate the efficiency gain; see Section~\ref{SS:illustration} below.  The focus of the present paper is the case where nuisance parameters are present, and our main contribution is to demonstrate that such problems often admit a further dimension reduction that yields valid marginal inference without loss of efficiency.  

Throughout the paper, we generally will not distinguish between the concepts of efficiency and auxiliary variable dimension reduction.  The reason is that if the dimension can be successfully reduced, then the challenge of constructing a good predictive random set for a relatively high-dimensional auxiliary variable can be avoided; simple choices, such as the default predictive random set \eqref{eq:default.prs} will suffice for efficient marginal inference.  Section~\ref{SS:optimal} makes this connection rigorous.

\subsection{Illustration: normal mean problem}
\label{SS:illustration}

Let $X_1,\ldots,X_n$ be independent $\nm(\theta,\sigma^2)$ observations, with $\sigma$ known but $\theta$ unknown.  In vector notation, $X = \theta 1_n + \sigma U$, where $U \sim \nm_n(0,I_n)$, $1_n$ is an $n$-vector of unity, and $I_n$ is the $n \times n$ identity matrix.  It would appear that the IM approach requires that we predict the entire unobservable $n$-vector $U$.  However, certain functions of $U$ are observed, making it unnecessary to predict the full vector.  Rewrite the association as 
\begin{subequations}
\label{eq:normal.mean}
\begin{align}
\Xbar & = \theta + \sigma n^{-1/2} \Ubar \label{eq:normal.mean.a} \\
(X_i-\Xbar)/\sigma & = U_i - \Ubar, \quad i=1,\ldots,n. \label{eq:normal.mean.b}
\end{align}
\end{subequations}
Since \eqref{eq:normal.mean.b} does not depend on $\theta$, the residuals $U_i-\Ubar$ are observed.  Motivated by this, \citet{imcond} propose to take \eqref{eq:normal.mean.a} as their (conditional) association, taking the auxiliary variable to be the scalar $\Ubar$ and updating its distribution by conditioning on the observed values of the residuals, $U_i-\Ubar$, given in \eqref{eq:normal.mean.b}.  Of course, in this case, the mean and the residuals are independent, so this amounts to ignoring \eqref{eq:normal.mean.b}, and the (conditional) association can be rewritten as  
\[ \Xbar = \theta + \sigma n^{-1/2} \Phi^{-1}(U), \quad U \sim \unif(0,1). \]
This simplifies the IM construction, since the auxiliary variable is only a scalar.  

For the A-step, we have $\Theta_x(u) = \{\xbar-\sigma n^{-1/2} \Phi^{-1}(u)\}$, a singleton set.  If, for the P-step, we take the default predictive random set $\S$ in \eqref{eq:default.prs}, then the C-step combines $\Theta_x$ with $\S$ to get
\[ \Theta_x(\S) = \bigl[ \xbar - \sigma n^{-1/2}\Phi^{-1}(0.5 \pm |U-0.5|) \bigr], \quad U \sim \unif(0,1). \]%, \, \xbar - \sigma n^{-1/2} \Phi^{-1}(0.5 - |U-0.5|) \bigr]. \]
The belief and plausibility functions are now easy to evaluate.  For example, if $A=\{\theta\}$ is a singleton assertion, $\bel_x(A;\S)$ is trivially zero, but 
\[ \pl_x(\theta;\S) = 1 - \Bigl| 2\Phi\Bigl( \frac{\xbar-\theta}{\sigma n^{-1/2}} \Bigr) - 1 \Bigr|. \]
From this, it is easy to check that the corresponding $100(1-\alpha)$\% plausibility region, $\{\theta: \pl_x(\theta;\S) > \alpha\}$, matches up exactly with the classical z-interval.

\section{Marginal inferential models}
\label{S:mim} 

\subsection{Preview: normal mean problem, cont. }
\label{SS:preview}

Suppose $X_1,\ldots,X_n$ are independent $\nm(\mu,\sigma^2)$ observations, with $\theta=(\mu,\sigma^2)$ unknown, but only the mean $\mu$ is of interest.  Following the conditional IM argument in \citet{imcond}, the baseline (conditional) association for $\theta$ may be taken as 
\begin{equation}
\label{eq:normal.mean.baseline}
\Xbar = \mu + \sigma n^{-1/2} U_1 \quad \text{and} \quad S = \sigma U_2, 
\end{equation}
where $U_1 \sim {\sf N}(0,1)$ and $(n-1)U_2^2 \sim {\sf ChiSq}(n-1)$.  This is equivalent to defining an association for $(\mu,\sigma^2)$ based on the minimal sufficient statistic; see Section~4.1 of \citet{imcond}.  This association involves two auxiliary variables; but since there is effectively only one parameter, we hope, for the sake of efficiency, to reduce the dimension of the auxiliary variable.  We may equivalently write this association as 
\begin{equation}
\label{eq:normal.mean.baseline.alt}
\Xbar = \mu + S n^{-1/2} U_1/U_2 \quad \text{and} \quad S = \sigma U_2. 
\end{equation}
The second expression in the above display has the following property: for any $s$, $\mu$, and $u_2$, there exists a $\sigma$ such that $s = \sigma u_2$.  This implies that, since $\sigma$ is free to vary, there is no direct information that can be obtained about $\mu$ by knowing $U_2$.  Therefore, there is no benefit to retain the second expression in \eqref{eq:normal.mean.baseline.alt}---and eventually predict the corresponding auxiliary variable $U_2$---when $\mu$ is the only parameter of interest.  

An alternative way to look at this point is as follows.  When $(\xbar, s)$ is fixed at the observed value, since $\sigma$ can take any value, we know that the unobserved $(U_1,U_2)$ must lie on exactly one of the $u$-space curves
\[ \frac{u_1}{u_2} = \frac{n^{1/2}(\xbar - \mu)}{s}, \]
indexed by $\mu$.  In Section~\ref{SS:three.step}, we argued that the ``best possible inference'' on $(\mu,\sigma^2)$ obtains if we observed the pair $(U_1,U_2)$.  In this case, however, the ``best possible marginal inference'' on $\mu$ obtains if only we observed which of these curves $(U_1,U_2)$ lies on.  This curve is only a one-dimensional quantity, compared to the two-dimensional $(U_1,U_2)$, so an auxiliary variable dimension reduction is possible as a result of the fact that only $\mu$ is of interest.  In this particular case, we can ignore the $U_2$ component and work with the auxiliary variable $V=U_1/U_2$, whose marginal distribution is a Student-t with $n-1$ degrees of freedom.  As this is involves a one-dimensional auxiliary variable only, we have simplified the P-step without sacrificing efficiency; see Section~\ref{SS:optimal}. %there is a gain in efficiency due to the marginal nature of the problem. 

\subsection{Regular models and marginalization}
\label{SS:basics}

The goal of this section is to formalize the arguments given in Section~\ref{SS:preview} for the normal mean problem.  For $\theta=(\psi,\xi)$, with $\psi$ the interest parameter, the basic idea is to set up a new association between the data $X$, an auxiliary variable $W$, and the parameter of interest $\psi$ only.  With this, we can achieve an overall efficiency gain since the dimension of $W$ generally less than that of the original auxiliary variable.  

To emphasize that $\theta=(\psi,\xi)$, rewrite the association \eqref{eq:amodel} as 
\begin{equation}
\label{eq:new-aeqn}
p(X;\psi,\xi) = a(U;\psi,\xi), \quad U \sim \prob_U.
\end{equation}
Now suppose that there are functions $\bar p$, $\bar a$, and $c$, and new auxiliary variables $V=(V_1,V_2)$, with distribution $\prob_V$, such that \eqref{eq:new-aeqn} can equivalently be written as 
\begin{subequations}
\label{eq:regular1}
\begin{align}
\bar p(X,\psi) & = \bar a(V_1,\psi) \label{eq:regular1a} \\
c(X,V_2,\psi,\xi) & = 0. \label{eq:regular1b}
\end{align}
\end{subequations}
The equivalence we have in mind here is that a sample $X$ from the sampling model $\prob_{X|\psi,\xi}$, for given $(\psi,\xi)$, can be obtained by sampling $V=(V_1,V_2)$ from $\prob_V$ and solving for $X$.  See Remark~\ref{re:regular} below for more on the representation \eqref{eq:regular1}.  

The normal mean example in Section~\ref{SS:preview}, with association \eqref{eq:normal.mean.baseline} and auxiliary variables $(U_1,U_2)$, is clearly of the form \eqref{eq:regular1}, with $V_1=U_1/U_2$ and $V_2=U_2$.  In the normal example, recall that the key point leading to efficient marginal inference was that observing $U_2$ does not provide any direct information about the interest parameter $\mu$.  For the general case, we need to assume that this condition, which we call ``regularity,'' holds.  This assumption holds in many examples, but there are non-regular models and, in such cases, special considerations are required; see Section~\ref{S:gmim}. 

\begin{defn}
\label{def:regular}
The association \eqref{eq:new-aeqn} is regular if it can be written in the form \eqref{eq:regular1}, and the function $c$ satisfies, for any $(x,v_2,\psi)$, there exists a $\xi$ such that $c(x,v_2,\psi,\xi) = 0$.   
\end{defn}

In the regular case, it is clear that, like in the normal mean example in Section~\ref{SS:preview}, knowing the exact value of $V_2$ does not provide any information about the interest parameter $\psi$, so there is no benefit to retaining the component \eqref{eq:regular1b} and eventually trying to predict $V_2$.  Therefore, in the regular case, we propose to construct a marginal IM for $\psi$ with an association based only on \eqref{eq:regular1a}.  That is, 
\begin{equation}
\label{eq:marginal.assoc}
\bar p(X,\psi) = \bar a(V_1, \psi), \quad V_1 \sim \prob_{V_1}. 
\end{equation}
In regard to efficiency, like in the normal mean example of Section~\ref{SS:preview}, the key point here is that $V_1$ is generally of lower dimension than $U$, so, in the regular case, an auxiliary variable dimension reduction is achieved, thereby increasing efficiency.  

From this point, we can follow the three steps in Section~\ref{SS:three.step} to construct a marginal IM for $\psi$.  For the A-step, start with the marginal association \eqref{eq:marginal.assoc} and write
\[ \Psi_x(v_1) = \{\psi: \bar p(x,\psi) = \bar a(v_1,\psi) \}. \]
For the P-step, introduce a valid predictive random set $\S$ for $V_1$.  Combine these results in the C-step to get 
\[ \Psi_x(\S) = \bigcup_{v_1 \in \S} \Psi_x(v_1). \]
If $\Psi_x(\S) \neq \varnothing$ with $\prob_\S$-probability~1, then, for any assertion $A \subseteq \Psi$, the marginal belief and plausibility functions can be computed as follows:
\begin{align*}
\mbel_x(A;\S) & = \prob_{\S}\bigl\{ \Psi_x(\S) \subseteq A \bigr\}  \\
\mpl_x(A;\S) & = 1-\mbel_x(A^c;\S). 
\end{align*}
These functions can be used for inference as in Section~\ref{S:im.review}.  In particular, we may construct marginal plausibility intervals for $\psi$ using $\mpl_x$.  As we mentioned in Section~\ref{SS:three.step}, if $\Psi_x(\S) = \varnothing$ with positive $\prob_\S$-probability, then some adjustment to the belief function formula is needed, and this can be done by conditioning or by stretching.  The latter method, due to \citet{leafliu2012} is preferred; see Section~\ref{SS:stein2}.

\subsection{Remarks}
\label{SS:remarks}

\begin{remark}
\label{re:regular}
When there exists a one-to-one mapping $x \mapsto (T(x),H(x))$ such that the conditional distribution of $T(X)$, given $H(X)$, is free of $\xi$ and the marginal distribution of $H(X)$ is free of $\psi$, then an association of the form \eqref{eq:regular1} is available.  These considerations are similar to those presented in \citet{severini1999} and the references therein.  Specifically, suppose the distribution of the minimal sufficient statistic factors as $p(t \mid h, \psi) p(h \mid \xi)$, for statistics $T=T(X)$ and $H=H(X)$.  In this case, the observed value $h$ of $H$ provides no information about $\psi$, so we can take \eqref{eq:regular1a} to characterize the conditional distribution $p(t \mid h, \psi)$ of $T$, given $H=h$, and \eqref{eq:regular1b} to characterize the marginal distribution $p(h \mid \xi)$ of $H$.  Also, if $\prob_{X|\psi,\xi}$ is a composite transformation model \citep{bn1988}, then $T$ can be taken as a (maximal) $\xi$-invariant, whose distribution depends on $\psi$ only.  
\end{remark}

\begin{remark}
\label{re:bayes}
Consider a Bayes model with a genuine (not default) prior distribution $\Pi$ for $(\psi, \xi)$.  In this case, we can write an association, in terms of an auxiliary variable $U=(U_0,U_\psi,U_\xi) \sim \prob_{U_0} \times \Pi$, as 
\[ (\psi,\xi) = (U_\psi, U_\xi), \quad p(X; \psi, \xi) = a(U_0; U_\psi, U_\xi). \]
According to the argument in \citet[][Remark~4]{imcond}, an initial dimension reduction obtains, so that the baseline association can be re-expressed as 
\[ (\psi, \xi) = (U_\psi, U_\xi), \quad (U_\psi, U_\xi) \sim \Pi_X, \]
where $\Pi_X$ is just the usual posterior distribution of $(\psi,\xi)$, given $X$, obtained from Bayes theorem.  Now, by splitting the posterior into the appropriate marginal and conditional distributions, we get a decomposition 
\[ \psi = U_\psi \quad \text{and} \quad \xi - U_\xi = 0, \quad (U_\psi,U_\xi) \sim \Pi_X. \]
This association is regular, so a marginal association for $\psi$ obtains from its marginal posterior distribution, completing the A-step.  The P-step is a bit different in this case and deserves some explanation.  When no meaningful prior distribution is available, there is no probability space on which to carry out probability calculations.  The use of a random set on the auxiliary variable space provides such a space, and validity ensures that the corresponding belief and plausibility calculations are meaningful.  However, if a genuine prior distribution is available, then there is no need to introduce an auxiliary variable space, random sets, etc.  Therefore, in the genuine-prior Bayes context, we can take a singleton predictive random set in the P-step.  Then the IM belief function obtained in the C-step is exactly the usual Bayesian posterior distribution.  
\end{remark}

\begin{remark}
\label{re:other.marginal}
The discussion in Section~\ref{SS:basics}, and the properties to be discussed in the coming sections, suggest that the baseline association, or sampling model, being regular in the sense of Definition~\ref{def:regular} is a sufficient condition for valid marginalization.  In such cases, like in Sections~\ref{SS:bivariate}--\ref{SS:creasy}, fiducial and objective Bayes methods are valid for certain assertions, and the marginal IM will often give the same answers; examples with (implicit or explicit) parameter constraints, like that in Section~\ref{SS:stein2}, reveal some differences between IMs and fiducial and Bayes.  However, in non-regular problems, Bayes and fiducial marginalization may not be valid; see Section~\ref{S:gmim}.  In this light, regularity appears to also be a necessary condition for valid marginal inference.
\end{remark}

\begin{remark}
\label{re:take.away}
Condition \eqref{eq:regular1} helps to formalize the discussion in \citet[][Example~5.1]{hannig2012} for marginalization in the fiducial context by characterizing the set of problems for which his manipulations can be carried out.  Perhaps more importantly, in light of the observations in Remark~\ref{re:other.marginal}, it helps to explain why, for valid prior-free probabilistic marginal inference, the marginalization step must be considered \emph{before} producing evidence measures on the parameter space.  Indeed, producing fiducial or objective Bayes posterior distributions for $(\psi,\xi)$ and then marginalizing to $\psi$ is not guaranteed to work.  The choice of data-generating equation or reference prior must be considered before actually carrying out the marginalization.  The same is true for IMs, though identifying the relevant directions in the auxiliary variable space, as discussed in the previous sections, is arguably more natural than, say, constructing reference priors.  
\end{remark}

\subsection{Validity of regular marginal IMs}
\label{SS:valid}

An important question is if, for suitable $\S$, the marginal IM is valid in the sense of Definition~\ref{def:validity}.  We give the affirmative answer in the following theorem.  

\begin{thm}
\label{thm:credible}
Suppose that the baseline association \eqref{eq:new-aeqn} is regular in the sense of Definition~\ref{def:regular}, and that $\S$ is a valid predictive random set for $V_1$ with the property that $\Psi_x(\S) \neq \varnothing$ with $\prob_{\S}$-probability~1 for all $x$.  Then the marginal IM is valid in the sense of Definition~\ref{def:validity}, that is, for any $A \subset \Psi$ and any $\alpha \in (0,1)$, the marginal belief function satisfies
\[ \sup_{(\psi,\xi) \in A^c \times \Xi} \prob_{X|(\psi,\xi)}\bigl\{ \mbel_X(A;\S) \geq 1-\alpha \bigr\} \leq \alpha. \]
Since this holds for all $A$, a version of \eqref{eq:valid.pl} also holds:
\[ \sup_{(\psi,\xi) \in A \times \Xi} \prob_{X|(\psi,\xi)}\bigl\{ \mpl_X(A;\S) \leq \alpha \bigr\} \leq \alpha. \]
\end{thm}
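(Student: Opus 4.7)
The plan is to reduce the theorem to a direct application of Theorem~\ref{thm:old.valid}, the validity result for the basic (non-marginal) IM, applied now to the marginal association \eqref{eq:marginal.assoc}. The key point to verify is that \eqref{eq:marginal.assoc} really is a valid association for $\psi$, meaning that when $X \sim \prob_{X|(\psi,\xi)}$ for any $(\psi,\xi)$, there is a $V_1$ with $\bar p(X,\psi) = \bar a(V_1,\psi)$ whose distribution is $\prob_{V_1}$ and does not depend on $\xi$. Once that is in hand, the remaining argument is essentially already present in \citet{imbasics}.

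To verify the marginal association, I would use the data-generating interpretation of the regular representation \eqref{eq:regular1}: for given $(\psi,\xi)$, a sample $X \sim \prob_{X|(\psi,\xi)}$ can be obtained by drawing $V = (V_1,V_2) \sim \prob_V$ and then solving \eqref{eq:regular1a}--\eqref{eq:regular1b} for $X$. The $V_1$ produced this way has marginal distribution $\prob_{V_1}$ free of $\xi$, and by construction it satisfies $\bar p(X,\psi) = \bar a(V_1,\psi)$. The regularity hypothesis of Definition~\ref{def:regular} enters precisely here: it guarantees that \eqref{eq:regular1b} imposes no joint constraint that would couple $V_1$ to $\xi$ through $X$, because for any $(x,v_2,\psi)$ some $\xi$ solves $c(x,v_2,\psi,\xi)=0$. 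So $V_1$ can be treated as an unobserved quantity with known distribution $\prob_{V_1}$, exactly the setting covered by the basic three-step construction.

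With the marginal association in place, I would repeat the core step of the proof of Theorem~\ref{thm:old.valid}: for the true $(\psi,\xi)$ and any $\psi \notin A$, writing $V_1^\star$ for the $V_1$ underlying $X$, we have $\psi \in \Psi_X(V_1^\star)$, so $\S \ni V_1^\star$ forces $\psi \in \Psi_X(\S)$ and hence $\Psi_X(\S) \not\subseteq A$. Consequently
\[ \mbel_X(A;\S) = \prob_\S\{\Psi_X(\S) \subseteq A\} \leq \prob_\S\{\S \not\ni V_1^\star\} = Q_\S(V_1^\star). \]
Since $V_1^\star \sim \prob_{V_1}$ under $\prob_{X|(\psi,\xi)}$ regardless of $\xi$, the assumed validity of $\S$ in the sense of Definition~\ref{def:prs.valid} gives $\prob_{X|(\psi,\xi)}\{Q_\S(V_1^\star) \geq 1-\alpha\} \leq \alpha$ uniformly in $\xi$, which is exactly the belief bound after taking the supremum over $A^c \times \Xi$. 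The plausibility bound is then obtained by applying the belief bound to the complementary assertion $A^c$ and using $\mpl_x(A;\S) = 1 - \mbel_x(A^c;\S)$.

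The main obstacle is the justification in the second paragraph: one must argue rigorously that the marginal distribution of $V_1^\star$ under the sampling model $\prob_{X|(\psi,\xi)}$ agrees with $\prob_{V_1}$ for every nuisance value $\xi$. This is the content of regularity, and it is what fails in the non-regular problems of Section~\ref{S:gmim}; everything else in the proof is bookkeeping built on top of Theorem~\ref{thm:old.valid}.
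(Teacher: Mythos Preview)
Your proposal is correct and takes essentially the same approach as the paper: bound $\mbel_X(A;\S)$ by $Q_\S(V_1^\star)$ via monotonicity and the inclusion $\psi \in \Psi_X(V_1^\star)$, then invoke validity of $\S$ together with the fact that $V_1^\star \sim \prob_{V_1}$ under $\prob_{X|(\psi,\xi)}$ for every $\xi$. The paper in fact defers the details to the proof of Theorem~\ref{thm:unif.valid.im}, which carries out exactly this argument (with $Z_1(\xi)$ in place of your $V_1^\star$); your more explicit discussion of how regularity guarantees $V_1^\star \sim \prob_{V_1}$ free of $\xi$ is a welcome elaboration of a point the paper leaves implicit.
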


\begin{proof}%[Proof of Theorem~\ref{thm:credible}]
\ifthenelse{1=1}{
Similar to the validity theorem proofs in \citet{imbasics, imbasics.c, imcond}.  This result is also covered by the proof of Theorem~\ref{thm:unif.valid.im} below. 
}{ 
Define $Q_{\S}(w) = \prob_{\S}\{\S \not\ni w\}$, the probability that the predictive random set $\S$ misses the target $w$.  Theorems~1--2 in \citet{imbasics} show that admissibility of $\S$ implies that $\prob_W\{Q_{\S}(W) \geq 1-\alpha \} \leq \alpha$ for all $\alpha \in (0,1)$.  

Note that, if $\psi$ is the true parameter value, then $\Psi_x(\S)$ misses $\psi$ if and only if $\S$ misses $w^\star$.  Suppose $\psi \not\in A$.  Then monotonicity of the marginal belief function gives  
\[ \mbel_x(A;\S) \leq \mbel_x(\{\psi\}^c;\S) = \prob_{\S}\{\Psi_x(\S) \not\ni \psi\} = \prob_{\S}\{\S \not\ni w^\star\} = Q_{\S}(w^\star). \]
Note that $x$ and $w^\star$ are connected in the sense that a sample $X \sim \prob_{X|\theta}$ determines and is determined by a sample $W \sim \prob_W$.  So varying $X$ according to $\prob_{X|\theta}$ is equivalent to varying $W=W^\star$ according to $\prob_W$.  Therefore, we get the desired result: for all $\psi \not\in A$, 
\[ \prob_{X|(\psi,\xi)}\{ \mbel_X(A;\S) \geq 1-\alpha \} \leq \prob_W\{Q_{\S}(W) \geq 1-\alpha\} \leq \alpha, \]
where the last inequality follows by the admissibility assumption.  The second part of the theorem follows from the fact that $\mpl_x(A;\S) = 1-\mbel_x(A^c;\S)$.  
}
\end{proof}

Therefore, if the baseline association is regular and the predictive random set is valid, then the marginal IM constructed has the desirable frequency calibration property.  In particular, this means that marginal plausibility intervals based on $\mpl_x$ will achieve the nominal frequentist coverage probability; we see this exactness property in the examples in Section~\ref{S:applications}.  More importantly, this validity property ensures that the output of the marginal IM is meaningful both within and across experiments.

\subsection{Efficiency of regular marginal IMs}
\label{SS:optimal}

Start with a regular association \eqref{eq:regular1}, and let $\S \sim \prob_\S$ be a valid predictive random set for $(V_1,V_2)$ in $\VV_1 \times \VV_2$.  Assume that $\Theta_x(\S)$, a random subset of $\Psi \times \Xi$, is non-empty with $\prob_\S$-probability~1 for all $x$.  This assumption holds, for example, if the dimension of $(V_1,V_2)$ matches that of $(\psi,\xi)$, which can often be achieved by applying the techniques in \citet{imcond} to the baseline association prior to marginalization.  

In the regular case, we have proposed to marginalize over $\xi$ by ``ignoring'' the second component of the association \eqref{eq:regular1b}.  We claim that an alternative way to view the marginalization step is via an appropriate stretching of the predictive random set.  To see this, for given $\S$, consider an enlarged predictive random set $\bar\S$ obtained by stretching $\S$ to fill up the entire $V_2$-dimension.  Equivalently, take $\S_1$ to be the projection of $\S$ to the $V_1$-dimension, and then set $\bar\S = \S_1 \times \VV_2$.  It is clear that, if $\S$ is valid, then so is $\bar\S$, but, as explained in the next paragraph, the larger $\bar\S$ cannot be more efficient than $\S$.  In the case of marginal inference, however, the bigger predictive random set $\bar\S$ is equivalent to $\S$, so stretching/ignoring yields valid marginalization without loss of efficiency; see, also, the discussion below following Theorem~\ref{thm:efficient}. 

We pause here to give a quick high-level review of the notion of IM efficiency as presented in \citet{imbasics}.  In the present context, we have two valid predictive random sets $\S$ and $\bar\S$.  If $B$ is some assertion about $(\psi,\xi)$, then we say that $\S$ is as efficient as $\bar\S$ (relative to $B$) if $\pl_X(B;\S)$ is stochastically no larger than $\pl_X(B;\bar\S)$ as a function of $X$ when $(\psi,\xi) \not\in B$.  To see the intuition behind this definition, note that a plausibility region for $(\psi,\xi)$ is obtained by keeping those singleton assertions whose plausibility exceeds a threshold.  Making the plausibility function as small as possible, subject to the validity constraint, will make the plausibility region smaller and, hence, the inference more efficient.  One quick application, for the present case where $\S \subset \bar\S$, note that $\Theta_x(\S) \subseteq \Theta_x(\bar\S)$.  Since the bigger set will have a higher probability of intersecting with an assertion, $\pl_x(\cdot;\bar\S) \geq \pl_x(\cdot;\S)$ and $\bar\S$ is no more efficient than $\S$.  

For marginal inference, the assertions of interest are of the form $B=A \times \Xi$, where $A \subseteq \Psi$ is the marginal assertion on $\psi$.  In this case, it is easy to check that 
\[ \pl_x(A \times \Xi; \bar\S) = \pl_x(A \times \Xi; \S), \quad \text{for all $x$}, \]
so $\S$ and $\bar\S$ are equivalent for inference on $A \times \Xi$.  That is, no efficiency is lost by stretching the predictive random set $\S$ to $\bar\S$.  A natural question is: why stretch $\S$ to $\bar\S$?  To answer this, note that $\pl_x(A \times \Xi; \bar\S)$ is exactly equal to $\mpl_x(A;\S_1)$, the marginal plausibility function for $\psi$, at $A$, based on the projection $\S_1$ of the random set $\S$ to the $V_1$-dimension.  Therefore, we see that ``ignoring'' the auxiliary variable $V_2$ in \eqref{eq:regular1b} is equivalent to stretching the predictive random set for $(V_1,V_2)$ till it fills the $V_2$-dimension.  We summarize this discussion in the following theorem.

\begin{thm}
\label{thm:efficient}
Consider a regular association \eqref{eq:regular1} and let $\S$ be a valid predictive random set for $(V_1,V_2)$ such that $\Theta_x(\S) \neq \varnothing$ with $\prob_\S$-probability~1 for each $x$.  Then the projection $\S_1$ is a valid predictive random set for $V_1$ and, furthermore, $\pl_x(A \times \Xi; \S) = \mpl_x(A; \S_1)$ for any marginal assertion $A \subseteq \Psi$.  
\end{thm}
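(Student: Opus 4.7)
The statement has two parts: validity of the projection $\S_1$ as a predictive random set for $V_1$, and the identity relating the full plausibility of a cylinder assertion to the marginal plausibility. My plan is to handle these separately, since the first is a purely measure-theoretic consequence of projection while the second is where the regularity hypothesis of Definition~\ref{def:regular} does the real work.

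For validity of $\S_1$, I would first observe that for any $(v_1,v_2) \in \VV_1 \times \VV_2$, the event $\{\S_1 \not\ni v_1\}$ implies $\{\S \not\ni (v_1,v_2)\}$, because $(v_1,v_2)\in\S$ forces $v_1$ into the projection. Hence $Q_{\S_1}(v_1) \leq Q_\S(v_1,v_2)$ pointwise in $v_2$. Integrating against $\prob_V$ and using that the left-hand side depends only on $v_1$, we get $\prob_V\{Q_{\S_1}(V_1) \geq 1-\alpha\} \leq \prob_V\{Q_\S(V_1,V_2)\geq 1-\alpha\} \leq \alpha$ by the assumed validity of $\S$, which is Definition~\ref{def:prs.valid} for $\S_1$ with respect to the marginal $\prob_{V_1}$.

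For the plausibility identity, I would rewrite both sides as hitting probabilities: $\pl_x(A\times\Xi;\S) = \prob_\S\{\Theta_x(\S) \cap (A\times\Xi) \neq \varnothing\}$ and $\mpl_x(A;\S_1) = \prob_\S\{\Psi_x(\S_1) \cap A \neq \varnothing\}$. The key algebraic step is to show, for every $(v_1,v_2) \in \VV_1 \times \VV_2$, that the projection of $\Theta_x(v_1,v_2)$ onto $\Psi$ coincides with $\Psi_x(v_1)$. The inclusion $\subseteq$ is automatic from \eqref{eq:regular1a}; the reverse inclusion is exactly where Definition~\ref{def:regular} kicks in: given $\psi\in\Psi_x(v_1)$, regularity supplies a $\xi$ with $c(x,v_2,\psi,\xi)=0$, so $(\psi,\xi)\in\Theta_x(v_1,v_2)$. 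Taking unions over $(v_1,v_2)\in\S$ and projecting commutes with union, so the $\psi$-projection of $\Theta_x(\S)$ equals $\bigcup_{v_1\in\S_1}\Psi_x(v_1) = \Psi_x(\S_1)$. Finally, the event $\Theta_x(\S)\cap(A\times\Xi)\neq\varnothing$ is equivalent to its $\psi$-projection meeting $A$, which gives the desired equality of the two probabilities.

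The main obstacle, and the reason the result is nontrivial, is the reverse projection inclusion: without regularity, a value $\psi\in\Psi_x(v_1)$ need not lift to a full parameter $(\psi,\xi)\in\Theta_x(v_1,v_2)$ compatible with the observed $v_2$, and the stretched random set $\bar\S = \S_1\times\VV_2$ could strictly enlarge the candidate set for $\psi$. Everything else is bookkeeping with unions and projections, so the proof essentially reduces to checking this one set-theoretic identity enabled by Definition~\ref{def:regular}.
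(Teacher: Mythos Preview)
Your proposal is correct and follows essentially the same logic as the paper's argument, which is given in the discussion immediately preceding the theorem rather than as a separate proof. The only cosmetic difference is that the paper routes the plausibility identity through the stretched set $\bar\S = \S_1 \times \VV_2$, showing $\pl_x(A\times\Xi;\S) = \pl_x(A\times\Xi;\bar\S) = \mpl_x(A;\S_1)$, whereas you bypass $\bar\S$ and establish directly that the $\psi$-projection of $\Theta_x(v_1,v_2)$ equals $\Psi_x(v_1)$; in both cases the crux is exactly the reverse inclusion furnished by Definition~\ref{def:regular}, and your validity argument for $\S_1$ via $Q_{\S_1}(v_1)\leq Q_\S(v_1,v_2)$ is just an explicit version of the paper's one-line remark that enlarging a valid predictive random set preserves validity.
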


The theorem says that marginalization by ignoring \eqref{eq:regular1b} or stretching a joint predictive random set to fill the $V_2$-dimension results in no loss of efficiency.  The point is that any valid predictive random set for $(V_1,V_2)$ will result in a valid marginal predictive random set for $V_1$.  However, there are advantages to skipping directly to specification of the marginal predictive random set for $V_1$.  First, the lower-dimensional auxiliary variable is easier to work with.  Second, a good predictive random set for $V_1$ will tend to be smaller than the corresponding projection down from a good predictive random set for $(V_1,V_2)$.  Figure~\ref{fig:efficiency} gives an example of this for $(V_1,V_2)$ iid $\nm(0,1)$.  Realizations of two predictive random sets (both with coverage probability 0.5) are shown.  Clearly, constructing an interval directly for $V_1$ and then stretching over $V_2$ (rectangle) is more efficient than the projection of the joint predictive random set for $(V_1,V_2)$.  So, by Theorem~\ref{thm:efficient}, a valid joint IM will lead to a valid marginal IM, but marginalization before constructing a predictive random set, as advocated in Section~\ref{SS:basics}, is generally more efficient.  

%Alternatively, the regular representation of the baseline association shows exactly the direction along which a valid predictive random set should be stretched to achieve marginal inference without loss of efficiency.  However, the key advantage to working with the marginal association is that the auxiliary variable is lower-dimensional, which simplifies the construction of good predictive random sets.  Furthermore, since the user is not bound to work with a predictive random set for $V_1$ obtained via the projection operation in the theorem, it is generally possible to find a predictive random set that actually improves efficiency.  See, for example, the early optimality considerations in \citet{imbasics, imbasics.c}; work on IM optimality is currently ongoing.   

\begin{figure}
\begin{center}
\scalebox{0.6}{\includegraphics{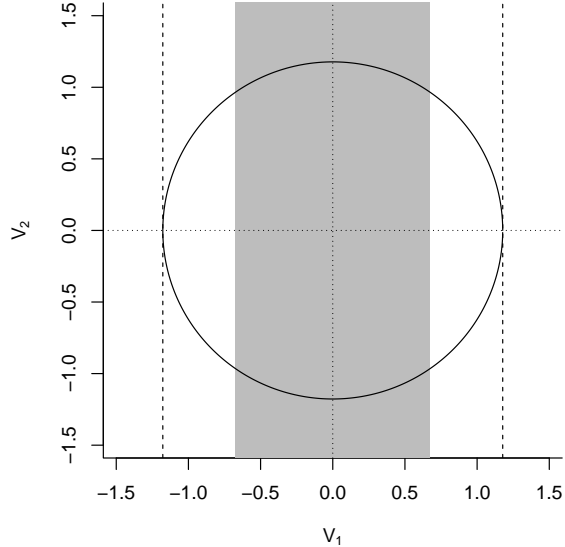}}
\end{center}
\caption{Realizations of two predictive random sets for $(V_1,V_2)$ iid $\nm(0,1)$; both have coverage probability 0.5.  Circle is for the pair, and the projection down to $V_1$-dimension is shown with dashed lines; gray rectangle is the cylinder obtained by constructing a predictive random set directly for $V_1$.}
\label{fig:efficiency}
\end{figure}

\section{Examples}
\label{S:applications}

\subsection{Bivariate normal correlation}
\label{SS:bivariate}

Let $X_1,\ldots,X_n$, with $X_i = (X_{i1},X_{i2})$, $i=1,\ldots,n$, be an independent sample from a bivariate normal distribution with marginal means and variances $\xi=(\mu_1,\mu_2,\sigma_1^2,\sigma_2^2)$ and correlation $\psi$.  It is well known that $(\hat\mu_1,\hat\mu_2,\hat\sigma_1^2,\hat\sigma_2^2,\hat\psi)$, with $\hat\mu_j$ and $\hat\sigma_j^2$, $j=1,2$, the marginal sample means and variances, respectively, and 
\[ \hat\psi = \frac{\sum_{i=1}^n (X_{i1}-\Xbar_1)(X_{i2}-\Xbar_2)}{\bigl\{\sum_{i=1}^n (X_{i1}-\Xbar_1)^2 \sum_{i=1}^n (X_{i2}-\Xbar_2)^2\bigr\}^{1/2}}, \]
the sample correlation, together form a joint minimal sufficient statistic.  The argument in \citet[][Sec.~4.1]{imcond} implies that the conditional IM for $\theta=(\psi,\xi)$ can be expressed in terms of this minimal sufficient statistic.  That is, following the initial conditioning step, our baseline association for $(\psi,\xi)$ looks like 
\[ \hat\mu_j = \mu_j + \sigma_j M_j, \quad \hat\sigma_j^2 = \sigma_j^2 V_j, \quad \hat\psi=a(C,\psi), \quad j=1,2, \]
for an appropriate collection of auxiliary variables $U=(M_1,M_2,V_1,V_2,C)$ and a function $a(\cdot,\psi)$ to be specified below in \eqref{eq:bvn.fiducial}.  This is clearly a regular association, so we get a marginal association for $\psi$, which is most easily expressed as 
\begin{equation}
\label{eq:bvn.fiducial}
\hat\psi = G_\psi^{-1}(W), \quad W \sim \unif(0,1), 
\end{equation}
where $G_\psi$ is the distribution function of the sample correlation.  Fisher developed fiducial intervals for $\psi$ based on the fiducial density $p(\psi \mid \hat\psi) = |\del G_\psi(\hat\psi)/\del\psi|$.  In particular, the middle $1-\alpha$ region of this distribution is a $100(1-\alpha)$\% interval estimate for $\psi$.  It is known that this fiducial interval is exact, and also corresponds to the marginal posterior for $\psi$ under the standard objective Bayes prior for $\theta=(\mu_1,\mu_2,\sigma_1^2,\sigma_2^2,\psi)$; see \citet{berger2006}.  Interestingly, there is no proper Bayesian prior with the fiducial distribution $p(\psi \mid \hat\psi)$ as the posterior.  However, it is easy to check that, with the default predictive random set \eqref{eq:default.prs} for $U$ in \eqref{eq:bvn.fiducial}, the corresponding marginal plausibility interval for $\psi$ corresponds exactly to the classical fiducial interval.

\subsection{Ratio of normal means}
\label{SS:creasy}

Let $X_1 \sim \nm(\psi\xi,1)$ and $X_2 \sim \nm(\xi,1)$ be two independent normal samples, with unknown $\theta=(\psi,\xi)$, and suppose the goal is inference on $\psi$, the ratio of means.  This is the simplest version of the Fieller--Creasy problem \citep{fieller1954, creasy1954}.  Problems involving ratios of parameters, such as a gamma mean (Section~\ref{SS:gamma.mean}), are generally quite challenging and require special considerations.     

To start, write the baseline association as 
\[ X_1 = \psi\xi + U_1 \quad \text{and} \quad X_2 = \xi + U_2, \quad U_1,U_2 \iid \nm(0,1). \]
After a bit of algebra, this is clearly equivalent to 
\[ \frac{X_1 - \psi X_2}{(1 + \psi^2)^{1/2}} = \frac{U_1-\psi U_2}{(1+\psi^2)^{1/2}} \quad \text{and} \quad X_2 - \xi - U_2 = 0. \]
We, therefore, have a regular decomposition \eqref{eq:regular1}, so ``ignoring'' the part involving $\xi$ gives the marginal association
\[ \frac{X_1 - \psi X_2}{(1 + \psi^2)^{1/2}} = V, \]
where $V = (U_1-\psi U_2)/(1+\psi^2)^{1/2}$.  Since $V$ is a pivot, i.e., $V \sim \nm(0,1)$ for all $\psi$, the marginal association can be expressed as
\[ \frac{X_1 - \psi X_2}{(1 + \psi^2)^{1/2}} = \Phi^{-1}(W), \quad W \sim \unif(0,1). \]
For the corresponding marginal IM, the A-step gives 
\[ \Psi_x(w) = \{\psi: x_1 - \psi x_2 = (1 + \psi^2)^{1/2} \Phi^{-1}(w)\}. \]
Note that this problem has a non-trivial constraint, i.e., $\Psi_x(w)$ is empty for some $(x,w)$ pairs.  A similar issue arises in the many-normal-means problem in Section~\ref{SS:stein2}.  
However, if $\S$ is symmetric about $w=0.5$, like the default \eqref{eq:default.prs}, then $\Psi_x(\S)$ is non-empty with $\prob_\S$-probability 1 for all $x$.  Therefore, the marginal IM is valid if $\S$ is valid and symmetric about $w=0.5$.  In fact, for $\S$ in \eqref{eq:default.prs}, the marginal plausibility intervals for $\psi$ are the same as the confidence interval proposed by Fieller.  Then validity of our marginal IM provides an alternative proof of the coverage properties of Fieller's interval.

\subsection{Many-normal-means}
\label{SS:stein2}

Suppose $X \sim \nm_n(\theta,I_n)$, where $X$ and $\theta$ are the $n$-vectors of observations and means, respectively.  Assume $\theta \neq 0$ and write $\theta$ as $(\psi,\xi)$, where $\psi = \|\theta\|$ is the length of $\theta$ and $\xi = \theta/\|\theta\|$ is the unit vector in the direction of $\theta$.  The goal is to make inference on $\psi$.    

A baseline association for this problem is 
\begin{equation}
\label{eq:mnm.baseline}
X = \psi \xi + U, \quad U \sim \nm_n(0,I_n). 
\end{equation}
Since $\psi$ is a scalar, it is inefficient to predict the $n$-vector $U$.  Fortunately, the marginalization strategy discussed above will yield a lower dimensional auxiliary variable.     

Take $M$ to be an orthonormal matrix with $\xi^\top$ as its first row, and define $V = M U$.    This transformation does not alter the distribution, i.e., both $U$ and $V$ are ${\sf N}_n(0,I_n)$, and the baseline association \eqref{eq:mnm.baseline} can be re-expressed as  
\[ \|X\|^2 = (\psi + V_1)^2 + \|V_{2:n}\|^2 \quad \text{and} \quad \frac{X - M^{-1}V}{\|X-M^{-1}V\|} = \xi. \]
This is of the regular form \eqref{eq:regular1}, so the left-most equation above gives a marginal IM for $\psi$.  We make one more change of auxiliary variable, $W = F_{n,\psi}\bigl( (\psi + V_1)^2 + \|V_{2:n}\|^2 \bigr)$, where $F_{n,\psi}$ is the distribution function of a non-central chi-square with $n$ degrees of freedom and non-centrality parameter $\psi^2$.  The new marginal association is $\|X\|^2 = F_{n,\psi}^{-1}(W)$, with $W \sim \unif(0,1)$, and the A-, P-, and C-steps can proceed as usual.  In particular, for the P-step, we can use the predictive random set $\S$ in \eqref{eq:default.prs}.  However, the set $\Psi_x(w) = \{\psi: \|x\|^2 = F_{n,\psi}(w)\}$ is empty for $w$ in a set of positive measure so, if we take $\S$ to as in \eqref{eq:default.prs}, then the conditions of Theorem~\ref{thm:credible} are violated.  To remedy this, and construct a valid marginal IM, the preferred technique is to use an elastic predictive random set \citep{leafliu2012}.  These details, discussed next, are interesting and allow us to highlight a particular difference between IMs and fiducial.  

The default predictive random set $\S$ is determined by a sample $W \sim \unif(0,1)$, so write $\S=\S_W$.  Then $\Psi_x(\S_W)$ is empty if and only if 
\[ F_{n,0}(\|x\|^2) < \tfrac12 \quad \text{and} \quad |W-\tfrac12| < \tfrac12 - F_{n,0}(\|x\|^2). \]
To avoid this, \citet{leafliu2012} propose to stretch $\S_W$ just enough that $\Psi_x(\S_W)$ is non-empty.  In the case where $F_{n,0}(\|x\|^2) < \frac12$, we have 
\[ \Psi_x(\S_W) = \begin{cases} \{0\} & \text{if $|W-\frac12| \leq \frac12 - F_{n,0}(\|x\|^2)$}, \\ \{\psi: |F_{n,\psi}(\|x\|^2) - \frac12| \leq |W - \frac12|\} & \text{otherwise}. \end{cases} 
\]
So, for the case $F_{n,0}(\|x\|^2) < \frac12$, the marginal plausibility function based on the elastic version of the default predictive random set is $\mpl_x(0)=1$ and, for $\psi > 0$, 
\[ \mpl_x(\psi) = 2F_{n,0}(\|x\|^2) - \max\{|2F_{n,\psi}(\|x\|^2) - 1| + 2F_{n,0}(\|x\|^2) - 1,0\}; \]
the corresponding $100(1-\alpha)$\% marginal plausibility interval is $[0,\hat\psi(x,\frac{\alpha}{2})]$, where $\hat\psi(x,q)$ solves the equation $F_{n,\psi}(\|x\|^2) = q$.  When $F_{n,0}(\|x\|^2) > \frac12$, $\Psi_x(\S_W)$ is non-empty with probability~1, so no adjustments to the default predictive random set are needed.  In that case, the marginal plausibility function is 
\[ \mpl_x(\psi) = 1 - |2F_{n,\psi}(\|x\|^2) - 1|, \]
and the corresponding $100(1-\alpha)$\% plausibility interval for $\psi$ is 
\[ \{\psi: \tfrac{\alpha}{2} \leq F_{n,\psi}(\|x\|^2) \leq 1 - \tfrac{\alpha}{2}\}. \]
\citet{leafliu2012} show that an IM obtained using elastic predictive random sets are valid, so the coverage probability of the plausibility interval is $1-\alpha$ for all $\psi$.  

For comparison, the generalized fiducial approach outlined in Example~5.1 of \citet{hannig2012} uses the a same association $\|X\|^2 = F_{n,\psi}^{-1}(W)$ with $W \sim \unif(0,1)$.  Like in the discussion above, something must be done to avoid the conflict cases.  Following the standard fiducial ``continue to believe'' logic, it seems that one should condition on the event $\{W \leq F_{n,0}(\|x\|^2)\}$.  In this case, the corresponding central $100(1-\alpha)$\% fiducial interval for $\psi$ is 
\[ \Bigl\{ \psi: \frac{\alpha}{2} \leq \frac{F_{n,\psi}(\|x\|^2)}{F_{n,0}(\|x\|^2)} \leq 1 - \frac{\alpha}{2} \Bigr\}. \]
The recommended objective Bayes approach, using the prior $\pi(\theta) \propto \|\theta\|^{-(n-1)}$ \citep[e.g.,][]{tibshirani1989}, is more difficult computationally but its credible intervals are similar to those in the previous display.  However, this fiducial approach is less efficient compared to the marginal IM solution presented above.  Alternative fiducial solutions, such as that in \citet[][Example~5]{hannig.iyer.patterson.2006}, which employ a sort of stretching, similar to our use of an elastic predictive random set, give results comparable to our marginal IM.

\ifthenelse{1=1}{}{

We claim that the marginal IM describe above is no less efficient than the fiducial method.  To justify this, we present a brief simulation study.  For nine different $(n,\psi)$ pairs, 10,000 data sets are simulated, and 95\% interval estimates for $\psi$ are constructed using each method.  The interval estimates are compared based on estimated coverage probability and mean length in Table~\ref{table:mnm}.  Notice that all of the IM coverage probabilities are within an acceptable range of the target 0.95, while the fiducial only reaches the nominal level when $\psi$ is relatively large.  Moreover, the marginal IM intervals are shorter than the fiducial intervals, on average, across all the simulation settings.  For values $\psi > 1$, the two methods give similar answers.  The recommended objective Bayes approach, using the reference prior $\pi(\theta) \propto \|\theta\|^{-(n-1)}$ \citep[e.g.,][]{tibshirani1989}, is slightly more difficult computationally and gives results similar to fiducial in this case.

\begin{table}
\caption{Estimated coverage probabilities and mean lengths for the 95\% marginal IM plausibility and fiducial confidence intervals for $\psi$.}
\label{table:mnm}
\begin{center}
\begin{tabular}{cccccc}
\hline
& & \multicolumn{2}{c}{Coverage probability} & \multicolumn{2}{c}{Mean length} \\
$n$ & $\psi$ & Marginal IM & Fiducial & Marginal IM & Fiducial \\
\hline
2 & 0.1 & 0.947 & 0.000 & 2.80 & 2.98 \\
& 0.5 & 0.950 & 0.910 & 2.88 & 3.01 \\
& 1.0 & 0.951 & 0.970 & 3.09 & 3.12 \\
5 & 0.1 & 0.949 & 0.000 & 3.15 & 3.31 \\
& 0.5 & 0.949 & 0.831 & 3.24 & 3.34 \\
& 1.0 & 0.950 & 0.958 & 3.43 & 3.44 \\
10 & 0.1 & 0.950 & 0.000 & 3.53 & 3.65 \\
& 0.5 & 0.946 & 0.738 & 3.57 & 3.66 \\
& 1.0 & 0.951 & 0.943 & 3.73 & 3.73 \\
\hline
\end{tabular}
\end{center}
\end{table}
}

\section{Marginal IMs for non-regular models}
\label{S:gmim}

\subsection{Motivation and a general strategy}
\label{SS:strategy}

The previous sections focused on the case where the sampling model admits a regular baseline association \eqref{eq:regular1}.  However, there are important problems which are not regular, e.g., Sections~\ref{SS:behrens.fisher} and \ref{SS:gamma.mean} below, and new techniques are needed for such cases.  

Our strategy here is based on the idea of marginalization via predictive random sets.  That is, marginalization can be accomplished by using a predictive random set for $U$ in the baseline association that spans the entire ``non-interesting'' dimension in the transformed auxiliary variable space.  But since we are now focusing on the non-regular model case, some further adjustments are needed.  Start with an association of the form
\begin{subequations}
\label{eq:regular2}
\begin{align}
\bar p(X;\psi) & = Z_1(\xi), \label{eq:regular2a} \\
c(X,Z_2,\psi,\xi) & = 0. \label{eq:regular2b}
\end{align}
\end{subequations}
See the examples in Section~\ref{SS:nonregular.examples}.  This is similar to \eqref{eq:regular1} except that the distribution of $Z_1(\xi)$ above depends on the nuisance parameter $\xi$.  Therefore, we cannot hope to eliminate $\xi$ by simply ``ignoring'' \eqref{eq:regular2b} as we did in the regular case.  

If $\xi$ were known, then a valid predictive random set could be introduced for $Z_1(\xi)$.  But this predictive random set for $Z_1(\xi)$ would generally be valid only for the particular $\xi$ in question.  Since $\xi$ is unknown in our context, this predictive random set would need to be enlarged in order to retain validity for all possible $\xi$ values.  This suggests a notion of \emph{uniformly valid} predictive random sets.

\begin{defn}
\label{def:unif.valid.prs}
A predictive random set $\S$ for $Z_1(\xi)$ is \emph{uniformly valid} if it is valid for all $\xi$, i.e., $\prob_\S\{\S \not\ni Z_1(\xi)\}$ is stochastically no larger than $\unif(0,1)$, as a function of $Z_1(\xi) \sim \prob_{Z_1(\xi)}$, for all $\xi$.  
\end{defn}

For this more general non-regular model case, we have an analogue of the validity result in Theorem~\ref{thm:credible} for the case of a regular association.  We shall refer to the resulting IM for $\psi$ as a \emph{generalized marginal IM}.  

\begin{thm}
\label{thm:unif.valid.im}
Let $\S$ be a uniformly valid predictive random set for $Z_1(\xi)$ in \eqref{eq:regular2a}, such that $\Psi_x(\S) \neq \varnothing$ with $\prob_\S$-probability~1 for all $x$.  Then the corresponding generalized marginal IM for $\psi$ is valid in the sense of Theorem~\ref{thm:credible} for all $\xi$.  
\end{thm}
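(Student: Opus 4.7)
The plan is to adapt the validity argument from the proof of Theorem~\ref{thm:credible}, with uniform validity of $\S$ substituting for ordinary predictive-random-set validity at each fixed nuisance value. First I would fix an arbitrary assertion $A \subseteq \Psi$ and any $(\psi,\xi) \in A^c \times \Xi$, and observe that when $X \sim \prob_{X|\psi,\xi}$, the representation \eqref{eq:regular2a} determines a ``true'' realization $z_1^\star = Z_1(\xi)$ of the auxiliary variable satisfying $\bar p(X,\psi) = z_1^\star$. From the A-step definition $\Psi_x(z_1) = \{\psi : \bar p(x,\psi) = z_1\}$, the event $\{\Psi_X(\S) \not\ni \psi\}$ coincides with $\{\S \not\ni z_1^\star\}$.

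Next, I would apply monotonicity of the belief function. Because $\Psi_X(\S) \neq \varnothing$ with $\prob_\S$-probability~1, no Dempster-style conditioning adjustment is needed, so for any $\psi \not\in A$ one has $\{\psi\}^c \supseteq A$ and hence
\[ \mbel_X(A;\S) \leq \mbel_X(\{\psi\}^c;\S) = \prob_\S\{\Psi_X(\S) \subseteq \{\psi\}^c\} = \prob_\S\{\S \not\ni z_1^\star\} = Q_\S(z_1^\star), \]
where $Q_\S(z) = \prob_\S\{\S \not\ni z\}$.

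Then, viewing $z_1^\star$ as a random quantity driven by $X \sim \prob_{X|\psi,\xi}$, relation \eqref{eq:regular2a} forces $z_1^\star$ to follow the prescribed distribution $\prob_{Z_1(\xi)}$ for the given $\xi$. Invoking Definition~\ref{def:unif.valid.prs} at that same $\xi$ yields
\[ \prob_{X|\psi,\xi}\{\mbel_X(A;\S) \geq 1-\alpha\} \leq \prob_{Z_1(\xi)}\{Q_\S(Z_1(\xi)) \geq 1-\alpha\} \leq \alpha. \]
Since $(\psi,\xi) \in A^c \times \Xi$ was arbitrary, taking the supremum gives the stated belief-function bound. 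For the plausibility statement, applying the inequality just established to the complementary assertion $A^c$ and using the duality $\mpl_X(A;\S) = 1 - \mbel_X(A^c;\S)$ supplies the dual conclusion.

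The main obstacle, and precisely the reason the regular-case proof does not transplant verbatim, is that the distribution of $Z_1(\xi)$ depends on the unknown nuisance parameter. An ordinary validity condition would only deliver the tail bound at a single pre-specified $\xi$, which would be insufficient once one takes the supremum over $\Xi$. Definition~\ref{def:unif.valid.prs} is formulated exactly so that the same $\S$ yields a uniform tail bound on $Q_\S(Z_1(\xi))$ effective for every $\xi$ simultaneously, which is what legitimizes the $\sup_\xi$ in the conclusion.
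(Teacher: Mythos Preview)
Your proof is correct and follows essentially the same route as the paper's own argument: fix $(\psi,\xi)$ with $\psi\not\in A$, use monotonicity of $\mbel$ to bound by $\mbel_X(\{\psi\}^c;\S)=\prob_\S\{\S\not\ni Z_1(\xi)\}$, then invoke uniform validity at that $\xi$ and take the supremum. Your write-up is somewhat more explicit about the identification $\{\Psi_X(\S)\not\ni\psi\}=\{\S\not\ni z_1^\star\}$ and about the plausibility dual, but the underlying idea is identical to the paper's proof.
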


\begin{proof}%[Proof of Theorem~\ref{thm:unif.valid.im}]
The assumed representation of the sampling model $X \sim \prob_{X|(\psi,\xi)}$ implies that there exists a corresponding $Z_1(\psi) \sim \prob_{Z_1(\xi)}$.  That is, probability calculations with respect to the distribution of $X$ and are equivalent to probability calculations with respect to the distribution of $Z_1(\xi)$.  Take $\psi \not\in A$, so that $A \subseteq \{\psi\}^c$.  Then we have 
\[ \mbel_X(A;\S) \leq \mbel_X(\{\psi\}^c; \S) = \prob_\S\{\Psi_X(\S) \not\ni \psi\} = \prob_\S\{\S \not\ni Z_1(\xi)\}. \]
The assumed uniform validity implies that the right-hand side is stochastically no larger than $\unif(0,1)$ as a function of $Z_1(\xi)$.  This implies the same of the left-hand side as a function of $X$.  Therefore, 
\[ \sup_{\psi \not\in A} \prob_{X|(\psi,\xi)}\{\mbel_X(A;\S) \geq 1-\alpha\} \leq \alpha, \quad \forall \; \alpha \in (0,1). \]
This holds for all $\xi$, completing the proof.  
\end{proof}

There are a variety of ways to construct uniformly valid predictive random sets, but here we present just one idea, which is appropriate for singleton assertions and construction of marginal plausibility intervals.  Efficient inference for other kinds of assertions may require different considerations.  Our strategy here is based on the idea of replacing a nuisance parameter-dependent auxiliary variable $Z_1(\xi)$ by a nuisance parameter-independent type of stochastic bound.   

\begin{defn}
\label{def:fatter}
Let $Z$ be a random variable with distribution function $F_Z$ and median zero.  Another random variable $Z^\star$, with distribution function $F_{Z^\star}$ and median zero is said to be \emph{stochastically fatter} than $Z$ if the distribution function satisfy the constraint
\[ F_Z(z) < F_{Z^\star}(z), \quad z < 0 \quad \text{and} \quad F_Z(z) > F_{Z^\star}(z), \quad z > 0. \]
That is, $Z^\star$ has heavier tails than $Z$ in both directions.  
\end{defn}

For example, the Student-t distribution with $\nu$ degrees of freedom is stochastically fatter than ${\sf N}(0,1)$ for all finite $\nu$.  Two more examples are given in Section~\ref{SS:nonregular.examples}.  

Our proposal for constructing a generalized marginal IM for $\psi$ is based on the following idea: get a uniformly valid predictive random set for $Z_1(\xi)$ by first finding a new auxiliary variable $Z_1^\star$ that is stochastically fatter than $Z_1(\xi)$ for all $\xi$, and then introducing an ordinarily valid predictive random set for $Z_1^\star$.  The resulting predictive random set for $Z_1(\xi)$ is necessarily uniformly valid.  In this way, marginalization in non-regular models can be achieved through the choice predictive random set.  

Since the dimension-reduction techniques presented in the regular model case may be easier to understand and implement, it would be insightful to formulate the non-regular problem in this way also.  For this, consider replacing \eqref{eq:regular2a} with 
\begin{equation}
\label{eq:regular3a}
p(X,\psi) = Z_1^\star. 
\end{equation}
After making this substitution, the two parameters $\psi$ and $\xi$ have been separated in \eqref{eq:regular3a} and \eqref{eq:regular2b}, so the decomposition is regular.  Then, by the theory above, the marginal IM should now depend only on \eqref{eq:regular3a}.  The key idea driving this strategy is that a valid predictive random set for $Z_1^\star$ is necessarily uniformly valid for $Z_1(\xi)$.

\subsection{Examples}
\label{SS:nonregular.examples}

\subsubsection{Behrens--Fisher problem}
\label{SS:behrens.fisher}

The Behrens--Fisher problem is a fundamental one \citep{scheffe1970}.  It concerns inference on the difference between two normal means, based on two independent samples, when the standard deviations are completely unknown.  It turns out that there are no exact tests/confidence intervals that do not depend on the order in which the data is processed.  Standard solutions are given by \citet{hsu1938} and \citet{scheffe1970}, and various approximations are available, e.g., \citet{welch1938,welch1947}.  For a review of these and other procedures, see \citet{kimcohen1998}, \citet{ghosh.kim.2001}, and \citet{fraser.wong.sun.2009}.  

Suppose independent samples $X_{11},\ldots,X_{1n_1}$ and $X_{21},\ldots,X_{2n_2}$ are available from the populations ${\sf N}(\mu_1,\sigma_1^2)$ and ${\sf N}(\mu_2,\sigma_2^2)$, respectively.  Summarize the data sets with $\Xbar_k$ and $S_k$, $k=1,2$, the respective sample means and standard deviations.  The parameter of interest is $\psi = \mu_2-\mu_1$.  The problem is simple when $\sigma_1$ and $\sigma_2$ are known, or unknown but proportional.  For the general case, however, there is no simple solution.  Here we derive a generalized marginal IM solution for this problem.   

The basic sampling model is of the location-scale variety, so the general results in \citet{imcond} suggest that we may immediately reduce to a lower-dimensional model based on the sufficient statistics.  That is, we take as our baseline association 
\begin{equation}
\label{eq:bf-aeqn1}
\Xbar_k = \mu_k + \sigma_k \, n_k^{-1/2} \, U_{1k}, \quad \text{and} \quad S_k = \sigma_k U_{2k}, \quad k=1,2, 
\end{equation}
where the auxiliary variables are independent with $U_{1k} \sim {\sf N}(0,1)$ and $(n_k-1)U_{2k}^2 \sim {\sf ChiSq}(n_k-1)$ for $k=1,2$.  To incorporate $\psi = \mu_2-\mu_1$, combine the set of constraints in \eqref{eq:bf-aeqn1} for $\mu_1$ and $\mu_2$ to get $\Ybar = \psi + \sigma_2 \, n_2^{-1/2} \, U_{12} - \sigma_1 \, n_1^{-1/2} \, U_{11}$, where $\Ybar = \Xbar_2-\Xbar_1$.  Define $f(\sigma_1,\sigma_2) = (\sigma_1^2/n_1 + \sigma_2^2/n_2)^{1/2}$ and note that
\[ \sigma_2 \, n_2^{-1/2} \, U_{12} - \sigma_1 \, n_1^{-1/2} \, U_{11} \quad \text{and} \quad f(\sigma_1,\sigma_2) \, U_1 \]
are equal in distribution, where $U_1 \sim {\sf N}(0,1)$.  Making a change of auxiliary variables leads to a new and simpler baseline association for the Behrens--Fisher problem:
\begin{equation}
\label{eq:bf-aeqn2}
\Ybar = \psi + f(\sigma_1,\sigma_2) \, U_1 \quad \text{and} \quad S_k = \sigma_k U_{2k}, \quad k=1,2.
\end{equation}
Since $S_k = \sigma_k U_{2k}$ for $k=1,2$, we may next rewrite \eqref{eq:bf-aeqn2} as 
\begin{equation}
\label{eq:bf-aeqn3}
\frac{\Ybar - \psi}{f(S_1,S_2)} = \frac{f(\sigma_1, \sigma_2)}{f(\sigma_1 U_{21}, \sigma_2 U_{22})} \, U_1, \quad \text{and} \quad S_k = \sigma_k U_{2k}, \quad k=1,2. 
\end{equation}
If the right-hand side were free of $(\sigma_1,\sigma_2)$, the association would be regular and we could apply the techniques presented in Sections~\ref{S:mim} and \ref{S:applications}.  Instead, we have a decomposition like \eqref{eq:regular2}, so we shall follow the ideas presented in Section~\ref{SS:strategy}.  

Towards this, define $\xi = \xi(\sigma_1,\sigma_2) = (1 + n_1\sigma_1^2 / n_2 \sigma_2^2)^{-1}$, which takes values in $(0,1)$.  Make another change of auxiliary variable 
\[ Z_1(\xi) = \frac{U_1}{\{\xi U_{21}^2 + (1-\xi) U_{22}^2\}^{1/2}} \quad \text{and} \quad Z_2 = \frac{U_{22}^2}{U_{21}^2}. \]
Then \eqref{eq:bf-aeqn3} can be rewritten as 
\begin{equation}
\label{eq:bf-aeqn4}
\frac{\Ybar-\psi}{f(S_1,S_2)} = Z_1(\xi) \quad \text{and} \quad \frac{n_1 S_2^2}{n_2 S_1^2} = Z_2 \, \frac{1-\xi}{\xi}. 
\end{equation}
\citet{hsu1938} shows that $Z_1^\star \sim {\sf t}_{n_1 \wedge n_2-1}$ is stochastically fatter than $Z_1(\xi)$ for all $\xi$.  Let $G$ denote the distribution function of $Z_1^\star$, and let $W \sim \unif(0,1)$.  Then the corresponding version of \eqref{eq:regular3a} is 
\[ \frac{\Ybar-\psi}{f(S_1,S_2)} = G^{-1}(W). \]
We can get a uniformly valid predictive random set for $Z_1(\xi)$ by choosing an ordinarily valid predictive random set for $W$.  If we use the default predictive random set in \eqref{eq:default.prs} for $W$, then our generalized marginal IM plausibility intervals for $\psi$ match up with the confidence intervals in \citet{hsu1938} and \citet{scheffe1970}.  Also, the validity result from Theorem~\ref{thm:unif.valid.im} gives an alternative proof of the conservative coverage properties of the Hsu--Scheff\'e interval.  Finally, when both $n_1$ and $n_2$ are large, the bound $Z_1^\star$ on $Z_1(\xi)$ is tight.  So, at least for large samples, the generalized marginal IM is efficient.

\subsubsection{Gamma mean}
\label{SS:gamma.mean}

Let $X_1,\ldots,X_n$ be observations from a gamma distribution with unknown shape parameter $\alpha > 0$ and unknown mean $\psi$.  Here the goal is inference on the mean.  This problem has received attention, for it involves an exponential family model where a ratio of canonical parameters is of interest and there is no simple way to do marginalization.  Likelihood-based solutions are presented in \citet{fraser.reid.wong.1997}, and \citet{kulkarni.powar.2010} take a different approach.  Here we present a generalized marginal IM solution.  

The gamma model admits a two-dimensional minimal sufficient statistic for $\theta=(\psi,\alpha)$, which we will take as 
\[ T_1 = \log\Bigl( \frac1n \sum_{i=1}^n X_i \Bigr) \quad \text{and} \quad T_2 = \frac1n \sum_{i=1}^n \log X_i. \]
The most natural choice of association between data, $\theta$, and auxiliary variables is 
\[ T_1 = \log\Bigl( \frac1n \sum_{i=1}^n U_i' \Bigr) + \log\frac{\psi}{\alpha} \quad \text{and} \quad T_2 = \frac1n \sum_{i=1}^n \log U_i' + \log\frac{\psi}{\alpha}, \]
where $U_1',\ldots,U_n'$ are iid gamma random variables with both shape and mean equal to $\alpha$.  This association can be simplified by writing 
\[ T_1 - \log \psi = U_1(\alpha) \quad \text{and} \quad T_2 - \log \psi = U_2(\alpha), \]
where $U_1(\alpha)$ and $U_2(\alpha)$ are distributed as $\log( n^{-1}\sum_{i=1}^n U_i'/\alpha)$ and $n^{-1}\sum_{i=1}^n \log (U_i'/\alpha)$, respectively.   

Next, define $V_1 = U_1(\alpha)$ and $V_2 = U_1(\alpha)-U_2(\alpha)$.  For notational simplicity, we have omitted the dependence of $(V_1,V_2)$ on $\alpha$.  It is easy to check that $n\alpha e^{V_1}$ has a gamma distribution with shape parameter $n\alpha$; write $F_{n\alpha}$ for the corresponding gamma distribution function.  Let $\kappa(V_2)$ be an estimator of $\alpha$ based on $V_2$ alone.  This estimator could be a moment estimator or perhaps a maximum likelihood estimator based on the the marginal distribution of $V_2$; we give explicit estimators later.  Suppose that this estimator is consistent, i.e., $\kappa(V_2) \to \alpha$ in probability, with respect to the marginal distribution of $V_2$, as $n \to \infty$.  Then, as $n \to \infty$,  
\[ \Phi^{-1}\bigl( F_{n \kappa(V_2)}(n\kappa(V_2)e^{V_1}) \bigr) \to {\sf N}(0,1), \quad \text{in distribution}, \]
under the joint distribution of $V_1$ and $V_2$, for any $\alpha > 0$.  

The limit distribution result above is the beginning, rather than the end, of our analysis.  Indeed, consider the new association
\[ \Phi^{-1}\bigl( F_{n\kappa(T_1-T_2)}(n\kappa(T_1-T_2) e^{T_1 - \log \psi}) \bigr) = \Phi^{-1}\bigl( F_{n \kappa(V_2)}(n\kappa(V_2)e^{V_1}) \bigr). \]
Let $Z_1(\alpha)$ be a random variable with the same distribution as the quantity on the right-hand side.  We know that $Z_1(\alpha) \to {\sf N}(0,1)$, in distribution, as $n \to \infty$, for all $\alpha$. 

Take $\kappa(V_2)$ to be a moment-based estimator defined as follows.  The expectation of $V_2$ is equal to $g(n\alpha) - g(\alpha) - \log(n)$, where $g$ is the digamma function.  Then $\kappa(v_2)$ is defined as the solution for $\alpha$ in the equation $v_2 = g(n\alpha) - g(\alpha) - \log(n)$.  Similar equations appear in \citet{jensen1986} and \citet{fraser.reid.wong.1997} in a likelihood context.  Consistency of this moment estimator, as $n \to \infty$, is straightforward.  Moreover, for this $\kappa(V_2)$, we can derive limiting distributions for $Z_1(\alpha)$ as $\alpha \to \{0,\infty\}$.  Indeed, \citet{jensen1986} shows that $2n\alpha V_2$ converges in distribution to ${\sf ChiSq}(2n-2)$ and ${\sf ChiSq}(n-1)$ when $\alpha \to 0$ and $\alpha \to \infty$, respectively.  So, from this and the asymptotic approximation $g(x) = \log x - 1/(2x)$ for large $x$, it follows that $Z_1(\alpha) \to {\sf t}_{n-1}$, in distribution, as $\alpha \to \infty$.  A corresponding limit distribution as $\alpha \to 0$ is available, but we will not need this.  

We have a version of \eqref{eq:regular2a} given by 
\[ \Phi^{-1}\bigl( F_{n\kappa(T_1-T_2)}(n\kappa(T_1-T_2) e^{T_1 - \log \psi}) \bigr) = Z_1(\alpha), \]
and the goal is to find a uniformly valid predictive random set for $Z_1(\alpha)$.  We will proceed by finding a random variable $Z_1^\star$ that is stochastically fatter than $Z_1(\alpha)$ for all $\alpha$.  Unfortunately, the limit distribution ${\sf t}_{n-1}$ is not a suitable bound.  But it turns out that a relatively simple adjustment will do the trick.  First, take $\hat\nu$ as the projection of $2n\kappa(V_2)V_2$ onto $[n-1,2(n-1)]$.  Then, we define $\kappa^\star(V_2) = m(\hat\nu)/2nV_2$, where $m(\hat\nu)$ is the median of the ${\sf ChiSq}(\hat\nu)$ distribution.  With this adjusted estimator, we claim that the $Z_1^\star$ is stochastically fatter than $Z_1(\alpha)$ for all $\alpha$, where 
\begin{equation}
\label{eq:zstar}
Z_1^\star \sim 0.5 \, {\sf t}_{n-1}^+ + 0.5 \, (c_n \, {\sf t}_{n-1})^-.
\end{equation}
Here ${\sf t}_{n-1}^+$ is the positive half ${\sf t}$ distribution, and $(c_n \, {\sf t}_{n-1})^-$ is the negative half scaled-${\sf t}$ distribution.  The scaling factor $c_n$ on the negative side is  given by
\[ c_n = \{ 2(n-1) / m(2n-2) \}^{1/2}. \]  
Theoretical results justifying the claimed bound are available but, for brevity, we will only show a picture.  Figure~\ref{fig:gamma} shows the distribution of $Z_1(\alpha)$, based on the adjusted $\kappa^\star(V_2)$, for $n=2$ and $n=5$, for a range of $\alpha$, along with the distribution function corresponding to the mixture in \eqref{eq:zstar}.  The claim that $Z_1^\star$ is stochastically fatter than $Z_1(\alpha)$ is clear from the picture; in fact, the bound is quite tight for $n$ as small as 5.  

\begin{figure}
\begin{center}
\subfigure[$n=2$]{\scalebox{0.5}{\includegraphics{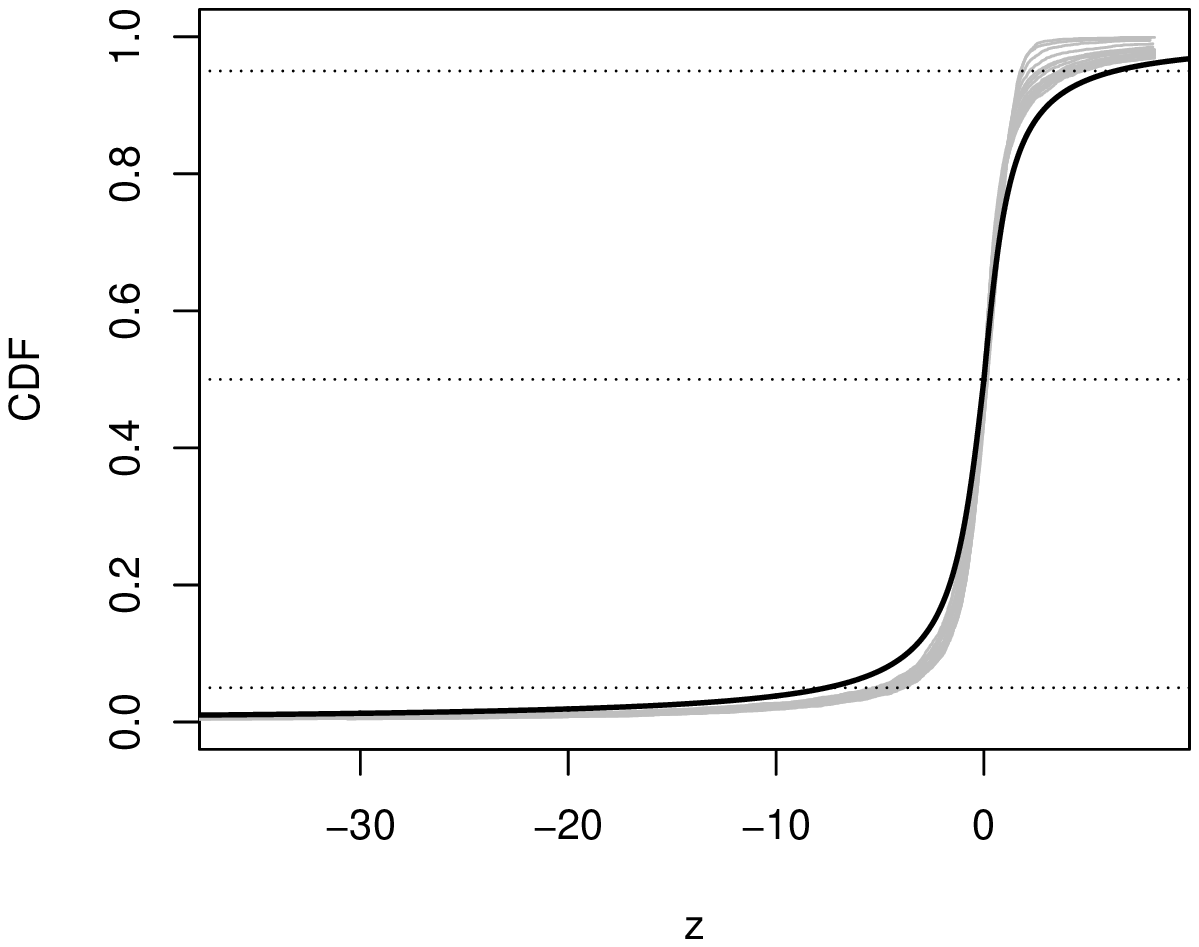}}}
\subfigure[$n=5$]{\scalebox{0.5}{\includegraphics{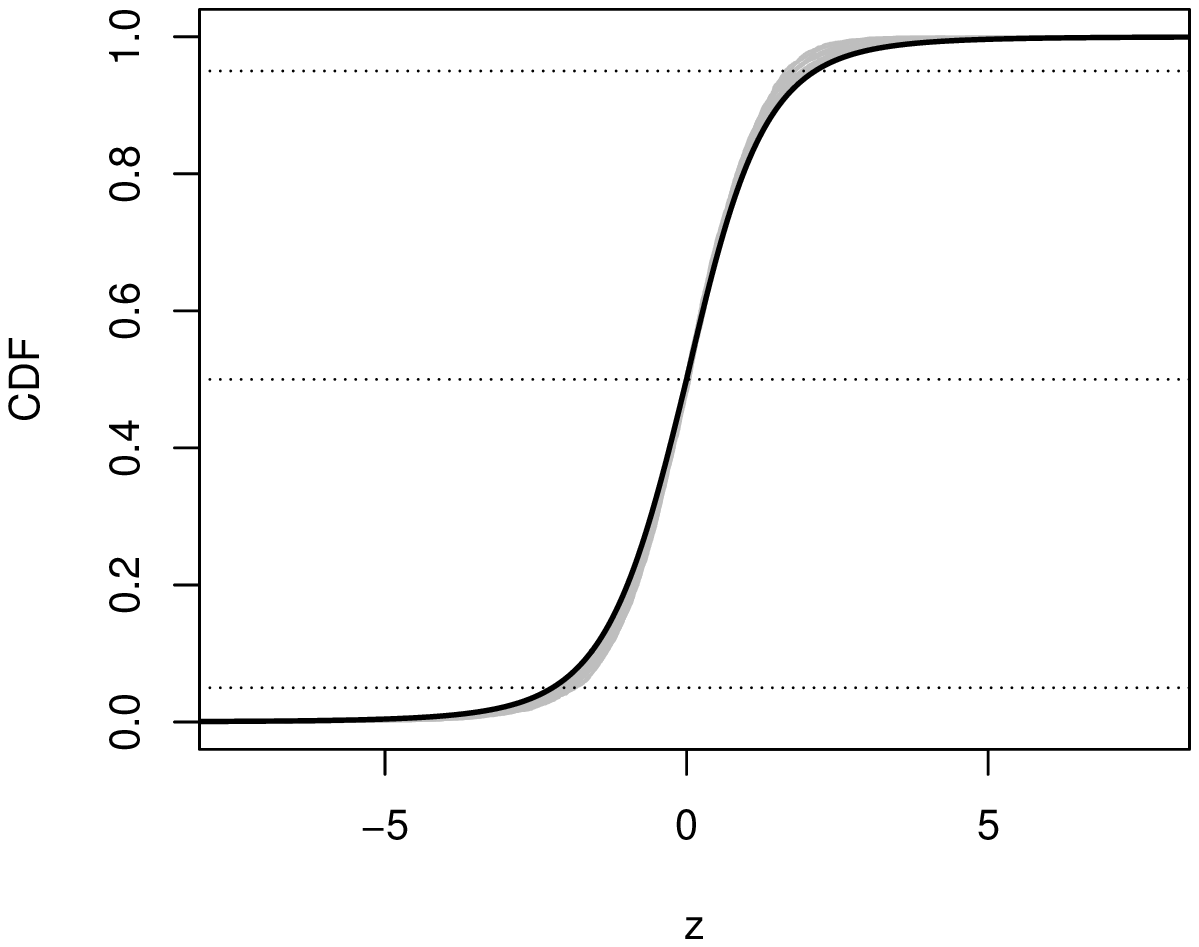}}}
\end{center}
\caption{Distribution functions of $Z_1(\alpha)$, based on the adjusted $\kappa^\star(V_2)$, over a range of $\alpha$.  Heavy line is the bound \eqref{eq:zstar}.}
\label{fig:gamma}
\end{figure}

If $G$ is the distribution function for the mixture in \eqref{eq:zstar}, and $W \sim \unif(0,1)$, then we can get a uniformly valid predictive random set for $Z_1(\alpha)$ by choosing a ordinarily valid predictive random set for $W$, such as the default \eqref{eq:default.prs}.  %In terms of \eqref{eq:regular3a}, we can write 
%\[ \Phi^{-1}\bigl( F_{n\kappa(T_1-T_2)}(n\kappa(T_1-T_2) e^{T_1 - \log \psi}) \bigr) = G^{-1}(W). \]
From here, constructing the generalized marginal IM for $\psi$ is straightforward.  

For illustration, we consider data on survival times of rats exposed to radiation given in \citet{fraser.reid.wong.1997}, modeled as an independent gamma sample with mean $\psi$.  The 95\% generalized marginal IM plausibility interval for $\psi$ is $(96.9, 134.4)$.  Second- and third-order likelihood-based 95\% confidence interval for $\psi$ are $(97.0, 134.7)$ and $(97.2, 134.2)$, respectively.  The third-order likelihood interval is the shortest, but the plausibility interval has guaranteed coverage for all $n$, so a direct comparison is difficult.  In simulations (not shown), for $n=2$, the generalized marginal IM is valid, but conservative; see Figure~\ref{fig:gamma}(a).  For larger $n$, the likelihood and IM methods are comparable.  %We expect to report on more efficient IM approximations elsewhere.  

\section{Discussion}
\label{S:discuss}

This paper focused on the problem of inference in the presence of nuisance parameters, and proposed a new strategy for auxiliary variable dimension reduction within the IM framework.  This reduction in dimension generally improves inference.  The regular versus non-regular classification introduced here shows which problems admit exact and efficient marginalization.  In the regular case, this marginalization can be accomplished efficiently using the techniques describe herein.  For non-regular problems, we propose a strategy based on uniformly valid predictive random sets, and one technique to construct these random sets using stochastic bounds.  While this simple strategy  maintains validity of the marginal IM, these can be conservative when $n$ is small.  Therefore, work is needed to develop more efficient marginalization strategies in non-regular problems.  

The dimension reduction considerations here are of critical importance for all statisticians working on high-dimensional problems.  In our present context, we have information that only a component of the parameter vector is of interest, and so we should use this information to reduce the dimension of the problem.  More generally, in particular in high-dimensional applications, there is information available about $\theta$, such as sparsity, and the goal is to incorporate this information and improve efficiency.  There are a variety of ways one can accomplish this, but all amount to a kind of dimension reduction.  So it is possible that the dimension reduction considerations here can help shed light on this important issue in modern statistical problems.

\section*{Acknowledgement}

The authors thank the Editor, Associate Editor, and three anonymous referees for their critical comments and suggestions, and Dr.~Jing-Shiang Hwang for helpful discussion on an earlier draft of this paper.  This work is partially supported by the U.S.~National Science Foundation, grants DMS--1007678, DMS--1208833, and DMS--1208841.

%\appendix

%\section{Proofs}
%\label{S:proofs}

\bibliographystyle{apalike}%{/Users/rgmartin/Research/TexStuff/asa} 
\bibliography{/Users/rgmartin/Dropbox/Research/mybib}

\end{document}